\font\smallit=cmti10
\renewcommand\section{\@startsection {section}{1}{\z@}
{-30pt \@plus -1ex \@minus -.2ex}
{2.3ex \@plus.2ex}
{\normalfont\normalsize\bfseries}}
\renewcommand\subsection{\@startsection{subsection}{2}{\z@}
{-3.25ex\@plus -1ex \@minus -.2ex}
{1.5ex \@plus .2ex}
{\normalfont\normalsize\bfseries}}
\renewcommand{\@seccntformat}[1]{\csname the#1\endcsname. }
\numberwithin{equation}{section}
\theoremstyle{plain}
\newtheorem{theorem}{Theorem}[section]
\newtheorem{corollary}[theorem]{Corollary}
\newtheorem{lemma}[theorem]{Lemma}
\newtheorem{proposition}[theorem]{Proposition}
\newcommand{\Ft}[1]{\mathcal{F}_{#1}}
\newcommand{\Lt}[1]{\mathcal{L}_{#1}}
\newcommand \lspan{\operatorname{span}}
\begin{document}

\begin{center}

\uppercase{\bf Zeros and Orthogonality of generalized Fibonacci polynomials}
\vskip 20pt
{\bf Cristian F. Coletti \footnote{Partially supported by a GRANT  2023/13453-5
and 2022/08948-2 S\~ao Paulo Research Foundation (FAPESP).}}\\
{\smallit Centro de Matem\'atica Computa\c{c}\~ao e Cogni\c{c}\~ao,  Universidad Federal do ABC, SP, Brazil.}\\
{\tt cristian.coletti@ufabc.edu.br}\\
\vskip 10pt
{\bf Rigoberto Fl\'orez \footnote{Partially supported by The Citadel Foundation.}}\\
\vskip 10pt
{\smallit Department of Mathematical Sciences, The Citadel, Charleston, SC, U.S.A.}\\
{\tt rigo.florez@citadel.edu}\\
\vskip 10pt
{\bf Robinson A. Higuita \footnote{Partially supported by UPTC.}} \\
{\smallit Escuela de Matem\'aticas y Estad\'istica, Departamento de Matem\'aticas, Universidad Pedag\'ogica y Tecnol\'ogica de Colombia, Tunja, Colombia.}
{\tt robinson.higuita@uptc.edu.co}\\
\vskip 10pt
{\bf Sandra Z. Yepes \footnote{Partially supported by a GRANT 2022/08948-2 S\~ao Paulo Research Foundation (FAPESP).} }\\
{\smallit Centro de Matem\'atica Computa\c{c}\~ao e Cogni\c{c}\~ao, Universidad Federal do ABC, SP, Brazil.}\\
{\tt sandra.maria@ufabc.edu.br}\\
\vskip 10pt

\end{center}
\vskip 30pt

\centerline{\smallit Received: , Revised: , Accepted: , Published: } 
\vskip 30pt

\begin{abstract}
This paper analyzes the concept of orthogonality in second-order polynomial sequences that have Binet formula similar to that of the Fibonacci and Lucas numbers, referred to as Generalized Fibonacci Polynomials (GFP). We give a technique to find roots of the GFP.  As a corollary of this result, we give an alternative proof of a special case of Favard's Theorem. The general case of Favard's Theorem guarantees that there is a measure to determine whether a sequence of second-order polynomials is orthogonal or not. However, the theorem does not provide an explicit such measure. Our special case gives both the explicit measure and the relationship between the second-order recurrence and orthogonality, demonstrating whether the GFP polynomials are orthogonal or not. This allows us to classify which of familiar GFPs are orthogonal and which are not. Some familiar orthogonal polynomials include the Fermat, Fermat-Lucas, both types of Chebyshev polynomials, both types of Morgan-Voyce polynomials, and Vieta and Vieta-Lucas polynomials. However, we prove that the Fibonacci, Lucas, Pell, and Pell-Lucas sequences are not orthogonal. 

In Section \ref{sectionrw}, we give a brief description of discrete--time and continuous--time Morkov chains with special emphasis on birth-and-death stochastic processes. 

We find sufficient conditions on the polynomial's coefficients under which a given family of orthogonal polynomial induces a Markov chain. These families of orthogonal polynomials include Chebyshev polynomials of first kind and Fermat-Lucas. 

In the final section, we highlight some connections between orthogonal polynomials and Markov processes. These relations are not new but seem to have been somewhat forgotten. We do so to draw the attention of researchers in the orthogonal polynomial and probability communities for further collaboration.
\end{abstract}

\noindent \emph{Keywords: }
Orthogonal polynomials, Fibonacci polynomial, Lucas polynomials.

\section {Introduction}
From classical literature, exemplified by \cite{hoggattRoots}, we are given the definition of \emph{Fibonacci} and \emph{Lucas} polynomials through the following recurrence relations: 
\begin{align*}
   & F_0(x)=0, \quad F_1(x)=1,  \quad F_n(x)= x F_{n-1}(x) +  F_{n-2}(x), \quad n\geq 2,\\
    &L_0(x)=2, \quad L_1(x)=x,  \quad L_n(x)= x L_{n-1}(x) +  L_{n-2}(x), \quad n\geq 2.
\end{align*}
The evaluation of these polynomials at $x=1$ yields the well-known Fibonacci and Lucas numbers, respectively.

The Binet formula is a fundamental tool in the study of linear recursive sequences, with well-known instances for Fibonacci and Lucas numbers outlined in \cite{hoggattRoots}. In this context, a second-order polynomial sequence is said to be of Fibonacci type  (Lucas type) if its Binet formula has a similar structure to that of Fibonacci (Lucas) numbers. Such sequences are referred to as \emph{generalized Fibonacci Polynomials} (GFP), with their formulas defined in the subsequent section. Examples of GFP include the following well-known polynomial sequences: Fibonacci, Lucas, Pell, Pell-Lucas, Fermat, Fermat-Lucas, both types of Chebyshev polynomials, Jacobsthal, Jacobsthal-Lucas, both types of Morgan-Voyce, and Vieta and Vieta-Lucas, (see Table \ref{familiarfibonacci}). 

Hoggatt and Bicknell \cite{hoggattRoots} examined the roots of classic Fibonacci and Lucas polynomials, while Webb and Parberry \cite{WebbParberry} focused on the roots of classic Fibonacci polynomials. The main result of Section \ref{Section5} determines roots of GFPs. Consequently, we present a method for finding the roots of GFPs by leveraging the roots of these classic polynomials.

In particular, this method allows us to determine all the roots of the familiar GFPs listed in Table \ref{familiarfibonacci}. This allows us to prove a special case of Favard's Theorem (the case that we need in this paper). 

In particular, this method allows us to determine all the roots of the familiar GFPs listed in Table \ref{familiarfibonacci}, which in turn leads to a proof of a special case of Favard's Theorem (the case relevant to this paper).

Using the special case of Favard's Theorem, we examine the orthogonality of GFPs. The general case of Favard's Theorem guarantees the existence of a measure that determines whether a second-order sequence is orthogonal. However, the theorem does not provide an explicit form of such a measure. Our special case not only identifies this measure explicitly but also clarifies how the second-order recurrence relation determines whether the polynomials are orthogonal or not.

Chebyshev polynomials are well known for their orthogonality, and Horadam \cite{HoradamOrthogonal} established a similar property for both types of Morgan-Voyce polynomials. The special case of Favard's Theorem provides a simple argument showing that Fibonacci and Lucas polynomials are not orthogonal. Consequently, as summarized in Table \ref{familiarfibonacci}, eight members of our familiar GFP family are orthogonal, while five are not.

 The primary objective of this paper is to address the natural question arising from the definition of GFP: under what conditions are GFPs orthogonal, and conversely, under what conditions are they not? Given the generality of this question, we narrow our focus to specific types of families. Our main goal is to provide concrete examples of both orthogonal and non-orthogonal GFPs.

Since the GFP polynomials defined in this paper are univariate, we consider the orthogonal polynomials associated with a real measure in one variable. However, a formal discussion of orthogonality is deferred to the next section.

Sections \ref{sectionrw} and \ref{remark} of this paper are devoted to the study of birth-and-death Markov processes in the context of orthogonal polynomials. In particular, we focus on establishing sufficient conditions on the coefficients of a polynomial that induce a Markov process of the aforementioned type. Additionally, we explore ergodicity conditions for the induced stochastic processes.

\section{Background: The Generalized Fibonacci Polynomials and orthogonal polynomials}

In this section, we present key definitions and fundamental results concerning Generalized Fibonacci Polynomials and Orthogonal Polynomials. While these results can be found scattered throughout the literature, we provide references to those that are pertinent to this paper.
  
\subsection{The Generalized Fibonacci Polynomials} 

We begin this section by summarizing key concepts introduced by  Fl\'orez et al. \cite{FlorezJC, florezHiguitaMuk2018, FlorezMcAnallyMuk} for Generalized Fibonacci Polynomials (GFP). Specifically, we consider two fixed nonzero polynomials, $d(x)$ and $g(x)$ in $\mathbb{Q}[x]$ with $\deg(d(x))>\deg(g(x))$. For $n\geq 2$,  a second-order polynomial recurrence relation of \emph{Fibonacci-type} is defined as follows:
\begin{equation}\label{Fibonacci;general:FT}
\Ft{0} (x)=0, \quad \Ft{1}(x)= 1,\quad   \text{and} \quad  \Ft{n}(x)= d(x) \Ft{n - 1}(x) + g(x) \Ft{n - 2}(x).
\end{equation}
Similarly, a second-order polynomial recurrence relation of \emph{Lucas-type} satisfies the relation:
\begin{equation}\label{Fibonacci;general:LT}
\Lt{0}(x)=p_{0}, \quad \Lt{1}(x)= p_{1}(x),  \quad \text{and} \quad  \Lt{n}(x)= d(x) \Lt{n - 1}(x) + g(x) \Lt{n - 2}(x),
\end{equation}
where $p_{0}\in \{\pm 1, \pm 2 \}$ and $p_{1}(x)$, $d(x)=\alpha p_{1}(x)$, and $g(x)$ are fixed non-zero polynomials in $\mathbb{Q}[x]$ with $\alpha$ an integer of the form $2/p_{0}$.

If $n\ge 0$ and $d^2(x)+4g(x) \ne 0$, then the Binet formulas for the recurrence relations in \eqref{Fibonacci;general:FT} and \eqref{Fibonacci;general:LT} are given by:
\begin{equation}\label{BinetFormulaUno}
\Ft{n}(x) = \dfrac{a^{n}(x)-b^{n}(x)}{a(x)-b(x)}
\quad
\text{ and }
\quad
\Lt{n}(x)=\dfrac{a^{n}(x)+b^{n}(x)}{\alpha}, 
\end{equation}
respectively,  where
\begin{equation}\label{equivalent}
a(x)=\frac{d(x)+\sqrt{d^2(x)+4g(x)}}{2}, \quad \text{ and } \quad b(x)=\frac{d(x)-\sqrt{d^2(x)+4g(x)}}{2}.
\end{equation}
Therefore, 
\begin{equation}\label{ConsequecesBinetFormulas}
a(x)+b(x)=d(x), a(x)b(x)= -g(x), \text{ and } \quad a(x)-b(x)=\sqrt{d^2(x)+4g(x)}.
\end{equation}

(For details on the construction of the two Binet formulas, see \cite{florezHiguitaMuk2018}.) Table \ref{familiarfibonacci} shows  some examples of polynomial sequences of these types.

\begin{table} [!ht]
\begin{center}\scalebox{0.8}{
\begin{tabular}{|l|l|l|l|} \hline
  Polynomial            & Initial value     & Initial value	& Recursive Formula 						       \\	
    			 &$G_0(x)=p_0(x)$  	&$G_1(x)=p_1(x)$	&$G_{n}(x)= d(x) G_{n - 1}(x) + g(x) G_{n - 2}(x)$ 	   \\  \hline   \hline
  Fibonacci             	 & $0$	    &$1$      	&$F_{n}(x) = x F_{n - 1}(x) + F_{n - 2}(x)$	 	       \\
  Lucas 	             	 &$2$	    & $x$ 	 	&$D_n(x)= x D_{n - 1}(x) + D_{n - 2}(x)$                \\ 						
  Pell			    		 &$0$	    & $1$       &$P_n(x)= 2x P_{n - 1}(x) + P_{n - 2}(x)$               \\
  Pell-Lucas 	    		 &$2$	    &$2x$       &$Q_n(x)= 2x Q_{n - 1}(x) + Q_{n - 2}(x)$               \\
  Pell-Lucas-prime 	    	 &$1$	    &$x$       	&$Q_n^{\prime}(x)= 2x Q_{n - 1}^{\prime}(x) + Q_{n - 2}^{\prime}(x)$  \\ \hline \hline
  Fermat  	                 &$0$	    & $1$      	&$\Phi_n(x)= 3x\Phi_{n-1}(x)-2\Phi_{n-2}(x) $           \\
  Fermat-Lucas	             &$2$	    &$3x$  		&$\vartheta_n(x)=3x\vartheta_{n-1}(x)-2\vartheta_{n-2}(x)$\\
  Chebyshev second kind      &$0$	    &$1$       	&$U_n(x)= 2x U_{n-1}(x)-U_{n-2}(x)$  	 	              \\
  Chebyshev first kind       &$1$	    &$x$       	&$T_n(x)= 2x T_{n-1}(x)-T_{n-2}(x)$  \\	 	
  Morgan-Voyce	             &$0$		&$1$      	&$B_n(x)= (x+2) B_{n-1}(x)-B_{n-2}(x) $  	 	         \\
  Morgan-Voyce 	             &$2$		&$x+2$      &$C_n(x)= (x+2) C_{n-1}(x)-C_{n-2}(x)$  	 	         \\
  Vieta 		             &$0$ 	   	&$1$	    &$V_n(x)=x V_{n-1}(x)-V_{n-2}(x)$ 	    \\
  Vieta-Lucas 		         &$2$ 	   	&$x$	    &$v_n(x)=x v_{n-1}(x)-v_{n-2}(x)$      \\
   
  \hline
\end{tabular}}
\end{center}
\caption{Recurrence relation of some GFP.} \label{familiarfibonacci}
\end{table}

For instance, the Lucas polynomial is a GFP of Lucas type, while the Fibonacci polynomial is a GFP of Fibonacci type. 

A sequence of Lucas type (Fibonacci type) is \emph{equivalent} or \emph{conjugate} to one of Fibonacci type (Lucas type),
if both share the same defining polynomials $d(x)$ and $g(x)$. Notice that equivalent sequences also have the same $a(x)$ and $b(x)$ in their Binet representations. Table \ref{familiarfibonacci} provides familiar examples (see \cite{florezHiguitaMuk2018, FlorezMcAnallyMuk}).

\begin{table} [!ht]
\begin{center}
\scalebox{0.8}{
\begin{tabular}{|l|l|l|l|} \hline
   Polynomial  	    	  	&Polynomial of 		&$a(x)$ 	               & $b(x)$			         \\	
   Lucas type 	 	        &Fibonacci type  	& 					       &      				     \\ \hline \hline
   Lucas 			       	&Fibonacci 		    &  $(x+\sqrt{x^2+4})/2$    & $(x-\sqrt{x^2+4})/2$    \\ 						
   Pell-Lucas-prime 	   	&Pell			    &  $x+\sqrt{x^2+1}$	       & $x-\sqrt{x^2+1}$        \\
   Fermat-Lucas 	       	&Fermat 			&  $(3x+\sqrt{9x^2-8})/2$  & $(3x-\sqrt{9x^2-8})/ 2$ \\
   Chebyshev 1st kind       &Chebyshev 2nd kind &  $x +\sqrt{x^2-1}$       & $x -\sqrt{x^2-1}$       \\
   Jacobsthal-Lucas	       	&Jacobsthal  		&  $(1+\sqrt{1+8x})/2$     & $(1-\sqrt{1+8x})/2$     \\
   Morgan-Voyce 	        &Morgan-Voyce	    &  $(x+2+\sqrt{x^2+4x})/2$ & $(x+2-\sqrt{x^2+4x})/2$ \\
   Vieta-Lucas              &Vieta            	& $(x+\sqrt{x^2-4})/2$     & $(x-\sqrt{x^2-4})/2$    \\ \hline
\end{tabular}}
\end{center}
\caption{$\Lt{0}(x)$ equivalent to $\Ft{0}(x)$.} \label{equivalent}
\end{table}

\subsection{Orthogonal polynomials} 

At this stage of research on orthogonal polynomials, a wealth of information is available for studying the subject. A concise introduction to orthogonal polynomials, focusing on the single-variable case that is of interest here, is provided by Koornwinder in \cite{Koornwinder}.

An infinite sequence $\{f_i\}$ of polynomial is called \emph{orthogonal} 
if $$\langle f_i(x),f_j(x) \rangle=\int_{\mathbb{R}}f_i(x)f_j(x) d\mu(x)=\delta(i,j),$$
where $\mu(x)$ is a (positive) \emph{Borel measure} on $\mathbb{R}$. 
In particular, if $d\mu(x)=w(x)dx$ on an interval $I$, where   $w(x)\ge 0$ is the \emph{weight function}, we have 
$$\int_{a}^{b}f_i(x)f_j(x) w(x)dx=\delta(i,j),$$
where $\delta(i,j)$ is the Kronecker delta, meaning that it is equal to $1$ if $i=j$ and zero otherwise. Moreover, $f_i$ has degree $i$. For the sake of simplicity, we consider $I=[a,b]$.

This theorem is now considered a classic result. Further details can be found in  \cite{Koornwinder}.

\begin{theorem} \label{FavardThm} Orthogonal polynomials $p_n$ satisfy the recurrence relation: 
    $$a_np_{n+1}(x)=(x-b_n) p_n(x) - d_np_{n-1}(x)),$$
with the initial condition $x p_0(x) = a_0 p_1(x) + b_0p_0(x)$, where $a_n, b_n, d_n$ are real constants, and $a_nd_{n+1} > 0$. 

Moreover, Favard Theorem states that if $p_n$ is a polynomial of degree $n$ satisfying the recurrence relation above, then there exists a positive measure $\mu$ on $\mathbb{R}$ such that the polynomials $p_n$ are orthogonal with respect to $\mu$.  
\end{theorem}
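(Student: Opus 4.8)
The plan is to dispatch the two halves of the statement separately: first that an orthogonal sequence obeys a three-term recurrence with $a_nd_{n+1}>0$, then the converse, which is Favard's theorem proper. For the first half, the key point is that $\{p_k\}_{k\ge 0}$ is a basis of $\mathbb{R}[x]$ because $\deg p_k=k$. Hence I can expand $x\,p_n(x)=\sum_{k=0}^{n+1}c_{n,k}p_k(x)$, and pairing with $p_j$ gives $c_{n,j}\langle p_j,p_j\rangle=\langle x p_n,p_j\rangle=\langle p_n,x p_j\rangle$. Since $x p_j$ has degree $j+1$, this inner product vanishes whenever $j+1<n$, so $c_{n,k}=0$ for $k\le n-2$ and the expansion collapses to a three-term relation; comparing leading coefficients shows $c_{n,n+1}\ne 0$, so we may set $a_n=1/c_{n,n+1}$ and rename $b_n,d_n$ accordingly. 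Evaluating $\langle x p_n,p_{n-1}\rangle$ in the two obvious ways yields $d_n\langle p_{n-1},p_{n-1}\rangle=a_{n-1}\langle p_n,p_n\rangle$; positivity of the measure makes every $\langle p_k,p_k\rangle$ strictly positive, whence $a_{n-1}d_n>0$, i.e. $a_nd_{n+1}>0$.

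For the converse I would construct the orthogonality functional directly. Define a linear functional $\mathcal{L}\colon\mathbb{R}[x]\to\mathbb{R}$ on the basis by $\mathcal{L}(1)=1$ and $\mathcal{L}(p_n)=0$ for $n\ge 1$. Feeding the recurrence into $\mathcal{L}(p_ip_j)$ and inducting on $\min(i,j)$ shows $\mathcal{L}(p_ip_j)=0$ for $i\ne j$, and that $h_n:=\mathcal{L}(p_n^2)$ satisfies $a_{n-1}h_n=d_nh_{n-1}$; since $a_{n-1}d_n>0$ we have $d_n/a_{n-1}>0$, and with $h_0=\mathcal{L}(p_0^2)>0$ this forces $h_n>0$ for all $n$. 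Then for any nonzero $q=\sum_{k\le m}\gamma_kp_k$ we get $\mathcal{L}(q^2)=\sum_k\gamma_k^2h_k>0$, so $\mathcal{L}$ is positive definite. The one genuinely analytic step remains: realize $\mathcal{L}$ as integration against a positive Borel measure $\mu$ on $\mathbb{R}$. I would invoke the solution of the Hamburger moment problem (all Hankel determinants of $m_k=\mathcal{L}(x^k)$ are positive, being products of the $h_k$ up to positive factors), or equivalently the Riesz--Haviland representation, using that $\mathcal{L}$ is nonnegative on squares. By construction the $p_n$ are then orthogonal with respect to $\mu$.

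I expect the existence of $\mu$ to be the main obstacle, since it is the only place a substantial theorem (the moment problem, or a Riesz-type representation) must be imported; everything else is linear algebra powered by the sign condition $a_nd_{n+1}>0$. For the \emph{special case} actually needed in this paper I would avoid the general moment problem altogether: the root-finding result of Section~\ref{Section5} expresses the zeros of a familiar GFP in terms of those of the classical Fibonacci/Lucas (hence Chebyshev-type) polynomials, so these zeros are real, simple, and interlacing between consecutive degrees. Interlacing together with a three-term recurrence is the classical criterion for positive definiteness, and moreover it lets one write $\mu$ down explicitly as a pushforward of the appropriate Chebyshev weight — which is exactly what is needed to settle the orthogonality/non-orthogonality entries of Table~\ref{familiarfibonacci}.
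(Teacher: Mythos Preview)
The paper does not prove Theorem~\ref{FavardThm}: it is stated as a classical result and the reader is referred to Koornwinder~\cite{Koornwinder}, so there is no in-paper argument to compare your proposal against. Your outline is the standard textbook proof and is essentially correct; the one cosmetic slip is that from $xp_n=\sum_k c_{n,k}p_k$ you should take $a_n=c_{n,n+1}$ (not $1/c_{n,n+1}$), since the stated recurrence rearranges to $xp_n=a_np_{n+1}+b_np_n+d_np_{n-1}$.

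Your final paragraph is the part that actually meets the paper. What the authors call their ``special case of Favard's theorem'' is not an argument for Theorem~\ref{FavardThm} itself but the combination of Proposition~\ref{GeneralChebCase} and Corollary~\ref{EvenWEight}: for a GFP with $d(x)$ linear and $g(x)$ a negative constant they write down the orthogonality measure explicitly as the pushforward of the Chebyshev weight under $x\mapsto d(x)/\sqrt{-g(x)}$; for $g(x)$ a positive constant they use the root description of Theorem~\ref{RootsGFFiboType} to see that $\Ft{p}(x)$ (for $p$ prime) has no real zeros and hence keeps a fixed sign on $\mathbb{R}$, so $\langle \Ft{p},\Ft{q}\rangle>0$ for any positive measure. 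That is precisely the explicit-weight route you sketch at the end, and it bypasses the Hamburger moment problem entirely. Your general moment-problem argument for the converse is therefore strictly more than the paper attempts; it is correct in outline, but the one genuinely nontrivial import (existence of $\mu$ from positive definiteness of $\mathcal{L}$) is exactly the step the paper avoids by restricting to cases where the measure can be exhibited.
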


The following theorem can be found in \cite[Theorem 2.9]{Marcellan}. 

\begin{theorem} \label{FavardMarcellan} Let $\{p_n\}$ be a sequence of orthogonal polynomials, where each $p_n$ is monic and has degree $n$. Then, the polynomials $p_n$ satisfy the recurrence relation
$$p_{n}(x)=(x-c_n) p_{n-1}(x) - \lambda_np_{n-2}(x),$$
for $n\ge 1$, where the sequences $\{c_n\}$ and $\{\lambda_n\}$  are given by:
$$c_n= \frac{\langle x p_{n-1}(x), p_{n-1}(x)\rangle}{\langle p_{n-1}(x), p_{n-1}(x) \rangle}, \; n\ge 1  \quad \text{ and }\quad  
\lambda_n= \frac{\langle p_{n-1}(x), p_{n-1}(x)\rangle}{\langle p_{n-2}(x), p_{n-2}(x) \rangle}, \; n\ge 2.$$
\end{theorem}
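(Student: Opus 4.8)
The plan is to exploit two facts: first, since each $p_n$ is monic of degree $n$, the family $\{p_0,p_1,\dots,p_n\}$ is a basis of the space of polynomials of degree at most $n$; second, multiplication by $x$ is self-adjoint for the inner product, $\langle xf,g\rangle=\int_{\mathbb R}xf(x)g(x)\,d\mu(x)=\langle f,xg\rangle$. Throughout I use that $\langle p_k,p_k\rangle\ne 0$ for every $k$, since $\mu$ is a positive measure; this is what makes the quotients defining $c_n$ and $\lambda_n$ meaningful.

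First I would expand $xp_{n-1}(x)$, which has degree $n$, in the basis $\{p_0,\dots,p_n\}$, writing $xp_{n-1}(x)=\sum_{k=0}^{n}a_kp_k(x)$. Comparing leading coefficients and using that $p_{n-1}$ and $p_n$ are monic gives $a_n=1$. For $k\le n-3$ I would compute $a_k\langle p_k,p_k\rangle=\langle xp_{n-1},p_k\rangle=\langle p_{n-1},xp_k\rangle$, and since $\deg(xp_k)\le n-2<n-1$, orthogonality of $p_{n-1}$ to all polynomials of lower degree forces $\langle p_{n-1},xp_k\rangle=0$, hence $a_k=0$. This leaves $xp_{n-1}=p_n+a_{n-1}p_{n-1}+a_{n-2}p_{n-2}$, which rearranges to $p_n(x)=(x-c_n)p_{n-1}(x)-\lambda_np_{n-2}(x)$ upon setting $c_n:=a_{n-1}$ and $\lambda_n:=a_{n-2}$ (for $n=1$ the same computation with the convention $p_{-1}=0$ gives $p_1(x)=x-c_1$).

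Next I would identify the two coefficients by pairing this identity against basis elements. Pairing with $p_{n-1}$, orthogonality annihilates $\langle p_n,p_{n-1}\rangle$ and $\langle p_{n-2},p_{n-1}\rangle$, leaving $\langle xp_{n-1},p_{n-1}\rangle=c_n\langle p_{n-1},p_{n-1}\rangle$, which is the stated formula for $c_n$. Pairing instead with $p_{n-2}$ yields $\langle xp_{n-1},p_{n-2}\rangle=\lambda_n\langle p_{n-2},p_{n-2}\rangle$. The one step requiring a small idea is to rewrite this numerator: by self-adjointness $\langle xp_{n-1},p_{n-2}\rangle=\langle p_{n-1},xp_{n-2}\rangle$, and since $xp_{n-2}$ is monic of degree $n-1$, we have $xp_{n-2}=p_{n-1}+r$ with $\deg r\le n-2$; as $r$ is orthogonal to $p_{n-1}$, we get $\langle xp_{n-1},p_{n-2}\rangle=\langle p_{n-1},p_{n-1}\rangle$, and therefore $\lambda_n=\langle p_{n-1},p_{n-1}\rangle/\langle p_{n-2},p_{n-2}\rangle$ as claimed.

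I do not expect a serious obstacle here; the argument is structural once the basis expansion and the symmetry of multiplication by $x$ are in place. The only delicate point is the non-degeneracy $\langle p_k,p_k\rangle\ne 0$ needed to divide, which is exactly where the positivity of the Borel measure $\mu$ is used; everything else is bookkeeping with the triangular change of basis between $\{1,x,\dots,x^n\}$ and $\{p_0,\dots,p_n\}$.
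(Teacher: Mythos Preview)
Your argument is correct and is exactly the standard proof of this classical three-term recurrence result. Note, however, that the paper does not supply its own proof of this theorem: it is simply quoted from \cite[Theorem~2.9]{Marcellan}, so there is no in-paper argument to compare against. What you wrote is precisely the textbook derivation one finds in that reference and elsewhere, and every step is sound---the basis expansion of $xp_{n-1}$, the use of $\langle xf,g\rangle=\langle f,xg\rangle$ to kill the coefficients $a_k$ for $k\le n-3$, and the identification of $\lambda_n$ via $xp_{n-2}=p_{n-1}+r$ with $\deg r\le n-2$.
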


In the previous theorem, if the sequences ${c_n}$ and ${\lambda_n}$ are constant, we observe that some of the polynomials in Table \ref{familiarfibonacci}, clearly satisfy the recurrence relation given in Theorem \ref{FavardMarcellan}, while others do not.
This observation leads to the following question: under what conditions do the GFP become orthogonal? 

The one of the objective of this paper is to investigate this question. Our main focus is to construct explicit families of both orthogonal and non-orthogonal polynomials derived from GFP.

\subsection{Orthogonal polynomials and random matrices}

In this subsection, we explore an important connection between Orthogonal Polynomials and Probability theory.  In a seminal paper, Wigner studied the spectral distribution of certain symmetric matrices (Wigner matrices) to better understand wave functions arising from quantum mechanical systems. Indeed, let 
$$
\mathbf{X}_{n}:=\left(X^{(n)}_{i j}\right), \quad 1 \leq i, j \leq n,
$$
be a family of random variables satisfying:
\begin{enumerate}
\item \label{Aleatory1} The random variables $X^{(n)}_{i j}$ with $1 \leq i < j \leq n$ are independent random variables and $X^{(n)}_{i j} = X^{(n)}_{j i}$;
\item \label{Aleatory2} On one hand the random variables $X^{(n)}_{i i}$  have the same distribution $F_{1}$ and, on the other hand, the random variables $X^{(n)}_{i j} (i \neq j)$ have the same distribution $F_{2}$;
\item \label{Aleatory3} $Var\left(X^{(n)}_{i j}\right):=\sigma_{2}^{2}<\infty$ for all $1 \leq i<j \leq n$.
\end{enumerate}
We denote the eigenvalues of $\mathbf{X}_{n}$ by $\lambda_{1, n}, \lambda_{2, n}, \ldots, \lambda_{n, n}$, and their \emph{empirical spectral distribution} by
$$
\Phi_{\mathbf{X}_{n}}(x)=\displaystyle \frac{\displaystyle \sum_{i=1}^n 1\{\lambda_{i,n} \leq x\}}{n}, 
$$
where $1$ is the indicator function.

Wigner \cite{Wigner1} derived his semi-circle law by studying the limiting spectral distribution of $\mathbf{X}_{n}$. In \cite{Wigner2}, he proved that if $\mathbf{X}_{n}$ is a Wigner matrix, then
$$
\lim _{n \rightarrow \infty} \Phi_{n^{-1 / 2} \mathbf{X}_{n}}(x)=\Phi(x) 
$$
almost surely. That is,   $\Phi_{n^{-1 / 2} \mathbf{X}_{n}}(x)$ converges weakly to a distribution $\Phi(x)$ as $n$ tends to infinity. Moreover, $\Phi(x)$ is absolutely continuous with respect to the Lebesgue measure, and its Radon--Nikodym derivatives is
$$
\phi(x)=\frac{1}{2 \pi \sigma_{2}^{2}} \sqrt{4 \sigma_{2}^{2}-x^{2}} \mathbf{1}_{|x| \leq 2 \sigma_{2}}.
$$

In random matrix theory, the semicircle law plays a role similar to that of the central limit theorem in classical probability theory. Both are universal in the sense that they apply to a broad class of random matrices and random variables, respectively. With the emergence of free probability, it became evident that the semicircle law is essentially the free analogue of the central limit theorem.

It is worth mentioning that orthogonal polynomials are related to highly precise estimates of the energy of a specific but important random graph, the so-called Erd\"{o}s--R\'enyi graph. More precisely, the energy of a graph is defined as the sum of the absolute values of the eigenvalues of its adjacency matrix. This matrix is a zero-one matrix, and when its off-diagonal entries are Bernoulli random variables, the corresponding adjacency matrix represents the adjacency matrix
 of the Erd\"{o}s--R\'enyi graph.
Using the semicircle law, the authors in \cite{Du-Li} prove that the energy of this random graph satisfies the equation
\begin{equation*}
    n^{3/2} \left(\frac{8}{3 \pi} \sqrt{p(1-p)} + o(1) \right), 
\end{equation*}
where $p$ is the parameter of the Bernoulli random variables and $n$ is the number of vertices of the Erd\"{o}s--R\'enyi graph.

These two examples are just a few of the many connections between orthogonal polynomials and probability theory. We are exploring further consequences of this connection in ongoing work.

\section{Orthogonality of GFP of Fibonacci and Lucas polynomials}

Horadam, in his paper ``New Aspects of Morgan-Voyce Polynomials'' \cite{HoradamOrthogonal}, showed that the Morgan-Voyce polynomials of both types are orthogonal polynomials. He employed the Chebyshev orthogonality method to derive both a proof and a corresponding weight function for the Morgan-Voyce polynomials. Swamy \cite{Swamy8} also proved that $B_n$ is orthogonal. Andr\'e-Jeannin \cite{AndreJeanninII,AndreJeanninI} proved that $U_n\!\left(\frac{x+p}{2\sqrt{q}}\right)$ and $2q^{n/2}T_n\!\left(\frac{x+p}{2\sqrt{q}}\right)$ are orthogonal for real numbers $p$ and $q>0$. 

In this section, we likewise apply the Chebyshev orthogonality approach to establish orthogonality for certain cases of GFPs and to determine their associated weight functions.

We present families of GFPs that are orthogonal. Their orthogonality depends on the function $d(x)$ being linear, $g(x)$ being constant, and on the selection of a suitable weight function. However, the question remains as to whether additional families of orthogonal GFPs exist. This issue will be addressed in the following section.

In Section~\ref{sectionrw}, we examine the connections between orthogonal GFPs and probability theory, particularly in the context of random walks.

\begin{lemma}[\cite{FlorezJC}] \label{genHogattLemma}  If $\Ft{n}(x)$ is a GFP of Fibonacci type, with $n>0$, then 
$$\Ft{n}(x)= \sum _{i=0}^{\lfloor \frac{n-1}{2}\rfloor } \binom{n-i-1}{i} d(x)^{n-2 i-1}g(x)^i.$$
\end{lemma}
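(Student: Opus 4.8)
The plan is to prove the closed form by induction on $n$, which is the natural approach for an identity defined by a second-order recurrence. First I would check the base cases $n=1$ and $n=2$ directly: for $n=1$ the sum has only the term $i=0$, giving $\binom{0}{0}d(x)^0 g(x)^0 = 1 = \Ft{1}(x)$, and for $n=2$ the sum again has only $i=0$, giving $\binom{1}{0}d(x)^1 = d(x) = \Ft{2}(x)$ since $\Ft{2}(x) = d(x)\Ft{1}(x) + g(x)\Ft{0}(x) = d(x)$.

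For the inductive step, assume the formula holds for $n-1$ and $n-2$ (with $n \geq 3$), and compute
$$
\Ft{n}(x) = d(x)\sum_{i=0}^{\lfloor (n-2)/2\rfloor}\binom{n-i-2}{i}d(x)^{n-2i-2}g(x)^i + g(x)\sum_{i=0}^{\lfloor (n-3)/2\rfloor}\binom{n-i-3}{i}d(x)^{n-2i-3}g(x)^i.
$$
The first sum contributes $\binom{n-i-2}{i}d(x)^{n-2i-1}g(x)^i$, and after reindexing $j = i+1$ the second sum contributes $\binom{n-j-2}{j-1}d(x)^{n-2j-1}g(x)^{j}$. Collecting the coefficient of $d(x)^{n-2i-1}g(x)^i$ for each $i$, I would use Pascal's rule $\binom{n-i-2}{i} + \binom{n-i-2}{i-1} = \binom{n-i-1}{i}$ to obtain exactly $\binom{n-i-1}{i}d(x)^{n-2i-1}g(x)^i$, which matches the claimed formula.

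The main obstacle is purely bookkeeping: one must verify that the ranges of the summation indices line up correctly under the reindexing, and in particular that the top index $\lfloor (n-1)/2\rfloor$ is produced with the correct binomial coefficient in both the even and odd cases of $n$. Concretely, when $n$ is odd the highest term $i = (n-1)/2$ comes only from the first (shifted) sum via $\binom{n-i-2}{i}$, and one checks $\binom{(n-3)/2}{(n-1)/2} = 0$ so the second sum does not overreach; when $n$ is even the highest term $i = (n-2)/2$ receives contributions compatible with Pascal's rule. Handling these edge cases — and the convention $\binom{m}{-1} = 0$ for the $i=0$ term of the reindexed second sum — is the only place where care is genuinely required; the rest is a routine application of the Pascal identity. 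I would present the argument by writing both sums over a common index set (padding with zero binomial coefficients) so that the edge cases dissolve into the general computation.
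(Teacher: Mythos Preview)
The paper does not actually supply a proof of this lemma: it is stated with a citation to \cite{FlorezJC} and used as a black box. So there is no ``paper's own proof'' to compare against. Your induction is the standard argument and is essentially correct.

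One small slip in your edge-case discussion: when $n$ is odd, the first sum (coming from $d(x)\Ft{n-1}(x)$) runs only up to $i=\lfloor (n-2)/2\rfloor=(n-3)/2$, while the reindexed second sum (coming from $g(x)\Ft{n-2}(x)$) runs up to $j=\lfloor(n-1)/2\rfloor=(n-1)/2$. Thus the top term $i=(n-1)/2$ is supplied by the \emph{second} (reindexed) sum, not the first; the vanishing binomial $\binom{(n-3)/2}{(n-1)/2}=0$ is what you would get from extending the \emph{first} sum, not the second. Your conclusion is right, but the attribution of which sum contributes is reversed. Your proposed remedy---extending both sums over the common range $0\le i\le \lfloor(n-1)/2\rfloor$ using the conventions $\binom{m}{-1}=0$ and $\binom{m}{k}=0$ for $k>m\ge 0$---is exactly the clean way to avoid this bookkeeping confusion, and once you do that the Pascal identity handles every index uniformly.
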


\begin{lemma}[\cite{FlorezJC}] \label{genHogattLucasLemma} If $\Lt{n}(x)$ is the conjugate (equivalent) of $\Ft{n}(x)$, then 
\[\Lt{n}(x)=\frac{1}{\alpha} \sum _{i=0}^{\lfloor \frac{n}{2}\rfloor }  \frac{n}{n-i} \binom{n-i}{i} d(x)^{n-2 i}g(x)^i.\]
\end{lemma}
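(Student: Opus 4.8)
The plan is to reduce the Lucas-type formula to the Fibonacci-type formula of Lemma~\ref{genHogattLemma} by a short Binet-formula manipulation, and then to finish with one elementary binomial-coefficient identity. (A self-contained alternative is a direct induction on $n$ using the common recurrence $\Gt{n}(x)=d(x)\Gt{n-1}(x)+g(x)\Gt{n-2}(x)$ together with the same binomial identity; I note this route as a fallback.)

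First I would establish a bridge identity: using the Binet formulas \eqref{BinetFormulaUno} and \eqref{ConsequecesBinetFormulas}, for $n\ge 1$ one has
\[
\alpha\,\Lt{n}(x)=a^n(x)+b^n(x)=\Ft{n+1}(x)+g(x)\,\Ft{n-1}(x).
\]
This follows by multiplying $a^n+b^n$ by $a-b$ and regrouping, $(a-b)(a^n+b^n)=(a^{n+1}-b^{n+1})-ab\,(a^{n-1}-b^{n-1})$, then dividing by $a-b$ and using $ab=-g$ from \eqref{ConsequecesBinetFormulas}. The small cases $n=1,2$ are checked directly, recalling $d=\alpha p_1$ and $\alpha p_0=2$.

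Next I would substitute the closed form of Lemma~\ref{genHogattLemma} into both $\Ft{n+1}$ and $\Ft{n-1}$. Both sums then have upper index $\lfloor n/2\rfloor$; reindexing the second sum (replacing its index $j$ by $i=j+1$) makes both run over the monomials $d(x)^{n-2i}g(x)^i$, and combining gives
\[
\alpha\,\Lt{n}(x)=\sum_{i=0}^{\lfloor n/2\rfloor}\left[\binom{n-i}{i}+\binom{n-1-i}{i-1}\right]d(x)^{n-2i}g(x)^i,
\]
with the convention $\binom{n-1-i}{i-1}=0$ for $i=0$. Finally I would verify the identity
\[
\binom{n-i}{i}+\binom{n-1-i}{i-1}=\frac{n}{\,n-i\,}\binom{n-i}{i}\qquad(0\le i\le\lfloor n/2\rfloor)
\]
by writing both sides in factorial form: the left side equals $\dfrac{(n-1-i)!}{i!\,(n-2i)!}\big[(n-i)+i\big]=\dfrac{n\,(n-1-i)!}{i!\,(n-2i)!}$, which is exactly the right side; substituting this in and dividing by $\alpha$ yields the claim.

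The argument is conceptually short, so the main obstacle is bookkeeping rather than ideas: one must make sure the two floor-indexed summation ranges line up at their endpoints after the reindexing (the $i=0$ term arising only from $\Ft{n+1}$ and, for even $n$, the top term arising only from $g\,\Ft{n-1}$), confirm that the convention $\binom{n-1-i}{i-1}=0$ introduces no spurious terms, and dispose of the small cases — $n=1$, and the degenerate $n=0$, for which $n/(n-i)$ is undefined and the statement should be read as $\Lt{0}(x)=p_0$. Everything else is routine manipulation of finite sums and binomial coefficients.
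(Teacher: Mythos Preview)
The paper does not prove this lemma at all: it is stated with a citation to \cite{FlorezJC} and used as a black box, so there is no ``paper's own proof'' to compare against.

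Your argument is correct and self-contained. The bridge identity $\alpha\Lt{n}=\Ft{n+1}+g\,\Ft{n-1}$ follows exactly as you say from the Binet formulas, the substitution of Lemma~\ref{genHogattLemma} and the reindexing are clean, and the binomial identity
\[
\binom{n-i}{i}+\binom{n-1-i}{i-1}=\frac{n}{n-i}\binom{n-i}{i}
\]
is verified correctly. One small inaccuracy in your closing remarks: for even $n$ the top term $i=\lfloor n/2\rfloor$ actually receives contributions from \emph{both} $\Ft{n+1}$ and $g\,\Ft{n-1}$, not only from the latter (e.g.\ $n=2k$, $i=k$ gives $\binom{k}{k}=1$ from the first sum and $\binom{k-1}{k-1}=1$ from the second). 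This does not affect the proof, since the binomial identity handles every $0\le i\le\lfloor n/2\rfloor$ uniformly; it is only your anticipated bookkeeping worry that is slightly misstated. Your handling of $n=0$ (where the formula is degenerate) and $n=1$ is appropriate.
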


\begin{lemma} \label{EvenOddFunctionTest} 
If $d(x)$ is an odd function and $g(x)$ is an even function, then the following statements hold:
\begin{enumerate}
\item \label{EvenOddFunctionTestFibo}  $\Ft{n}(-x)=(-1)^{n+1}\Ft{n}(x)$. 
\item \label{EvenOddFunctionTestLucas} $\Lt{n}(-x)=(-1)^{n+1}\Lt{n}(x)$. 
\end{enumerate}
\end{lemma}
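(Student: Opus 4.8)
The plan is to prove both statements simultaneously by strong induction on $n$, using the explicit closed forms from Lemmas~\ref{genHogattLemma} and \ref{genHogattLucasLemma}, or alternatively by a direct induction on the recurrence relations \eqref{Fibonacci;general:FT} and \eqref{Fibonacci;general:LT}. I would favor the recurrence-based induction since it handles both the Fibonacci-type and Lucas-type cases with the same argument and avoids manipulating binomial sums.

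First I would set up the hypotheses: $d(-x) = -d(x)$ (odd) and $g(-x) = g(x)$ (even). For part~\ref{EvenOddFunctionTestFibo}, the base cases are $\Ft{0}(-x) = 0 = (-1)^{1}\Ft{0}(x)$ and $\Ft{1}(-x) = 1 = (-1)^{2}\Ft{1}(x)$, both of which check out trivially. For the inductive step, assuming the claim holds for $n-1$ and $n-2$, I would compute
$$\Ft{n}(-x) = d(-x)\Ft{n-1}(-x) + g(-x)\Ft{n-2}(-x) = -d(x)\cdot(-1)^{n}\Ft{n-1}(x) + g(x)\cdot(-1)^{n-1}\Ft{n-2}(x),$$
and observe that $-(-1)^{n} = (-1)^{n+1}$ and $(-1)^{n-1} = (-1)^{n+1}$, so both terms carry the factor $(-1)^{n+1}$, giving $\Ft{n}(-x) = (-1)^{n+1}\Ft{n}(x)$ as desired. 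For part~\ref{EvenOddFunctionTestLucas}, the recurrence is identical, so the inductive step is word-for-word the same; the only thing to verify is that the initial conditions $\Lt{0}(x) = p_0$ and $\Lt{1}(x) = p_1(x)$ satisfy the parity pattern. Here one needs $\Lt{0}(-x) = (-1)^{1}\Lt{0}(x) = -p_0$, which forces $p_0 = 0$ — but $p_0 \in \{\pm 1, \pm 2\}$, so this looks problematic at first glance.

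The main obstacle, then, is exactly this apparent clash at $n=0$ for the Lucas-type case, and resolving it is where the argument needs care. The resolution is that the stated parity $\Lt{n}(-x) = (-1)^{n+1}\Lt{n}(x)$ should be read as holding for $n \ge 1$ (equivalently, the lemma's hypothesis implicitly restricts to the range where $\Ft{n}$ and $\Lt{n}$ are nontrivial), and one must check that the induction can be anchored at $n = 1, 2$ rather than $n = 0, 1$. So I would verify: $\Lt{1}(-x) = p_1(-x)$ must equal $(-1)^{2}p_1(x) = p_1(x)$, i.e. $p_1$ is even; but recall $d(x) = \alpha\, p_1(x)$ with $\alpha$ a nonzero constant, so $d$ odd is incompatible with $p_1$ even unless one re-examines what "$d$ odd" means — in fact for the familiar examples (Lucas, Pell-Lucas, Chebyshev first kind, Vieta-Lucas in Table~\ref{familiarfibonacci}) one has $p_1(x)$ proportional to $x$, which is odd, and $p_0 = 2$, and the genuine pattern is $\Lt{n}(-x) = (-1)^{n}\Lt{n}(x)$. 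I would therefore double-check the intended sign convention against the closed form in Lemma~\ref{genHogattLucasLemma}: with $d$ odd and $g$ even, the term $d(x)^{n-2i}g(x)^i$ has parity $(-1)^{n-2i} = (-1)^{n}$ under $x \mapsto -x$, uniformly in $i$, so $\Lt{n}(-x) = (-1)^{n}\Lt{n}(x)$, and likewise from Lemma~\ref{genHogattLemma} the term $d(x)^{n-2i-1}g(x)^{i}$ has parity $(-1)^{n-2i-1} = (-1)^{n-1} = (-1)^{n+1}$, confirming $\Ft{n}(-x) = (-1)^{n+1}\Ft{n}(x)$. Thus the Fibonacci-type statement is correct as written, and the cleanest proof for it is the one-line parity observation from Lemma~\ref{genHogattLemma}; for the Lucas-type statement I would either correct the exponent to $(-1)^{n}$ (matching Lemma~\ref{genHogattLucasLemma}) or, if the paper insists on $(-1)^{n+1}$, restrict to the conjugate-of-$\Ft{}$ normalization and state the range of $n$ explicitly. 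Either way, the substantive content is the uniform-in-$i$ parity of the monomials in the closed forms, and that is what I would present as the heart of the proof.
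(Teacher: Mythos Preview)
Your approach differs from the paper's: the paper argues directly from the Binet formula \eqref{BinetFormulaUno}, substituting $-x$ and using that $a(-x) = -b(x)$ and $b(-x) = -a(x)$ when $d$ is odd and $g$ is even, which produces the factor $(-1)^{n+1}$ in the Fibonacci-type quotient $(a^n - b^n)/(a-b)$. Your inductive argument and the uniform-parity check on the monomials of Lemma~\ref{genHogattLemma} are both valid alternatives for Part~\ref{EvenOddFunctionTestFibo}, and arguably more elementary since they avoid manipulating square roots.

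More importantly, you have correctly caught a sign error in Part~\ref{EvenOddFunctionTestLucas} as stated. Running the paper's own Binet-formula method on $\Lt{n}(x) = (a^n + b^n)/\alpha$ gives $\Lt{n}(-x) = ((-b)^n + (-a)^n)/\alpha = (-1)^n \Lt{n}(x)$, not $(-1)^{n+1}$; your check via Lemma~\ref{genHogattLucasLemma} confirms this, and the concrete Lucas example $\Lt{0}(x)=2$ already fails the stated identity at $n=0$. The paper's omitted ``analogous'' proof does not in fact yield the claimed exponent. Fortunately the only downstream use is Proposition~\ref{OthogonalDiffParity}, where what is needed is $\Lt{n}(-x)\Lt{m}(-x) = -\Lt{n}(x)\Lt{m}(x)$ for $n \not\equiv m \pmod 2$, and this holds with either exponent since $(-1)^{n+m} = (-1)^{(n+1)+(m+1)}$. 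So your diagnosis is right, and the fix---replacing $(-1)^{n+1}$ by $(-1)^{n}$ in Part~\ref{EvenOddFunctionTestLucas}---is harmless for the rest of the paper.
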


\begin{proof} We prove Part \ref{EvenOddFunctionTestFibo}; the proof of Part \ref{EvenOddFunctionTestLucas} is analogous, and we omit it.
Starting with equations \eqref{BinetFormulaUno}, \eqref{equivalent}, and \eqref{ConsequecesBinetFormulas}, we derive:

$$\Ft{n}(-x) = \dfrac{\left(\dfrac{d(-x)+\sqrt{d^2(-x)+4g(-x)}}{2}\right)^{n}-\left(\dfrac{d(-x)-\sqrt{d^2(-x)+4g(-x)}}{2}\right)^{n}}{\sqrt{d^2(-x)+4g(-x)}}.$$
Utilizing the properties that $d(x)$ is an odd function and $g(x)$ is an even function, we observe:
\begin{align*} 
\Ft{n}(-x) &=(-1)^{n+1}\cdot \dfrac{\left(\dfrac{d(x)+\sqrt{d^2(x)+4g(x)}}{2}\right)^{n}-\left(\dfrac{d(x)-\sqrt{d^2(x)+4g(x)}}{2}\right)^{n}}{\sqrt{d^2(x)+4g(x)}} \\ 
&=(-1)^{n+1} \Ft{n}(x).
\end{align*} 
This completes the proof. 
\end{proof}

\begin{proposition} \label{OthogonalDiffParity} 
Let $n, m \in \mathbb{Z}_{>0}$ with $n \not \equiv m \pmod{2}$. If $d(x)$ is an odd function and $g(x)$ is an even function, then the following statements hold:

\begin{enumerate}
\item \label{OthogonalDiffParityFibo} 

$\int_{-a}^{a} \Ft{n}(x) \Ft{m}(x) \, dx=0$.

\item \label{OthogonalDiffParityLucas} 

$\int_{-a}^{a} \Lt{n}(x) \Lt{m}(x) \, dx=0$.

\end{enumerate}
\end{proposition}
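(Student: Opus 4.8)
The plan is to use the parity result from Lemma~\ref{EvenOddFunctionTest} together with a change of variables. Both parts are proved the same way, so I will describe Part~\ref{OthogonalDiffParityFibo} and note that Part~\ref{OthogonalDiffParityLucas} follows verbatim with $\Lt{k}$ in place of $\Ft{k}$. By Lemma~\ref{EvenOddFunctionTest}\eqref{EvenOddFunctionTestFibo}, we have $\Ft{n}(-x)=(-1)^{n+1}\Ft{n}(x)$ and $\Ft{m}(-x)=(-1)^{m+1}\Ft{m}(x)$, so the product satisfies
$$
\Ft{n}(-x)\Ft{m}(-x) = (-1)^{n+m+2}\,\Ft{n}(x)\Ft{m}(x) = (-1)^{n+m}\,\Ft{n}(x)\Ft{m}(x).
$$
Since $n \not\equiv m \pmod 2$, the exponent $n+m$ is odd, so $\Ft{n}(x)\Ft{m}(x)$ is an odd function of $x$.

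First I would write $h(x) := \Ft{n}(x)\Ft{m}(x)$ and record that $h(-x) = -h(x)$ by the computation above. Then the integral over the symmetric interval $[-a,a]$ splits as $\int_{-a}^{0} h(x)\,dx + \int_{0}^{a} h(x)\,dx$. In the first integral, substitute $x \mapsto -x$: this sends $\int_{-a}^{0} h(x)\,dx$ to $\int_{0}^{a} h(-x)\,dx = -\int_{0}^{a} h(x)\,dx$, which exactly cancels the second integral. Hence $\int_{-a}^{a} \Ft{n}(x)\Ft{m}(x)\,dx = 0$, as claimed.

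There is essentially no obstacle here: the only thing to be slightly careful about is bookkeeping the sign of the exponent in $(-1)^{n+m+2}$ versus $(-1)^{n+m}$, and making sure the parity hypothesis $n \not\equiv m \pmod 2$ is what forces oddness of the product (if instead $n \equiv m$, the product would be even and the integral would generally be nonzero). One should also note that $a$ can be any positive real, and that the statement implicitly assumes the integral exists — which it does automatically, since $\Ft{n}$ and $\Ft{m}$ are polynomials and hence continuous on $[-a,a]$. For Part~\ref{OthogonalDiffParityLucas}, I would simply invoke Lemma~\ref{EvenOddFunctionTest}\eqref{EvenOddFunctionTestLucas} in place of part~\eqref{EvenOddFunctionTestFibo} and repeat the identical argument; no new ideas are needed, so I would state ``the proof is analogous'' and omit the details.
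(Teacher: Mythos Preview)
Your proof is correct and follows essentially the same approach as the paper: both invoke Lemma~\ref{EvenOddFunctionTest} to deduce that the product $\Ft{n}(x)\Ft{m}(x)$ is an odd function when $n\not\equiv m\pmod 2$, and then use the substitution $x\mapsto -x$ on half of the symmetric interval to see that the integral vanishes. Your write-up is in fact a bit cleaner in explicitly naming the product as an odd function before splitting the integral, but the underlying argument is identical.
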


\begin{proof} We prove Part \ref{OthogonalDiffParityFibo}; Part \ref{OthogonalDiffParityLucas} is analogous, and we omit it.
\begin{align*} 
\int_{-a}^{a} \Ft{n}(x) \Ft{m}(x) \, dx
						    &=-\int_{0}^{-a} \Ft{n}(x) \Ft{m}(x) \, dx + \int_{0}^{a} \Ft{n}(x) \Ft{m}(x) \, dx.
\end{align*}
This is equal to 
	$$-\int_{0}^{-a} (-1)^{(m+1)+(n+1)}\Ft{n}(-x) \Ft{m}(-x) \, dx + \int_{0}^{a} \Ft{n}(x) \Ft{m}(x) \, dx.$$ 
Given that $n$ and $m$ have different parity, and using the substitution $u=-x$, we have:   
$$\int_{-a}^{a} \Ft{n}(x) \Ft{m}(x) \, dx =\int_{0}^{a} \Ft{n}(u) \Ft{m}(u) \, du + \int_{0}^{a} \Ft{n}(x) \Ft{m}(x) \, dx=0.$$
This completes the proof. 
\end{proof}

The following theorem establishes the well-known result that Chebyshev polynomials are orthogonal. The proof follows directly from the identity $T_n(\cos x) = \cos (n x)$; see, for example, \cite{Rivlin}. For following theorem we use the notation given in Table \ref{familiarfibonacci}.

\begin{theorem} \label{ChebichevTypeFuctionFibo} If $\Ft{n}(x)=T_{n}(x) $ and $\Lt{n}(x)=U_{n}(x)$ are the Chebichev polynomial, then  

\begin{enumerate}
\item \label{ChebichevTypeFuctionFibo} 

\[ \int_{-1}^{1} T_{n}(x) T_{m}(x) \sqrt{1-x^2}  dx =
	\begin{cases} 
       0 &\text{ if } m \ne n \\
     \ne 0 &\text{ if } m = n.
   \end{cases}
\]
						    
\item \label{ChebichevTypeFuctionLucas} 

\[ \int_{-1}^{1} \dfrac{U_{n}(x)U_{m}(x)} { \sqrt{1-x^2}} \, dx =
	\begin{cases} 
       0 &\text{ if } m \ne n \\
     \ne 0 &\text{ if } m = n.
   \end{cases}
\]
\end{enumerate}
\end{theorem}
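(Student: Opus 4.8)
The plan is to use the substitution the text already points to: write $x=\cos\theta$ with $\theta\in[0,\pi]$, so that $\sqrt{1-x^2}=\sin\theta$ and $dx=-\sin\theta\,d\theta$, and invoke $T_n(\cos\theta)=\cos(n\theta)$ together with the identity $U_n(\cos\theta)=\sin(n\theta)/\sin\theta$ valid for the normalization of Table~\ref{familiarfibonacci} (where $U_0=0$ and $U_1=1$). First I would transcribe the two displayed integrals into trigonometric form. The first becomes
\[
\int_{-1}^{1}T_n(x)T_m(x)\sqrt{1-x^2}\,dx=\int_{0}^{\pi}\cos(n\theta)\cos(m\theta)\sin^2\theta\,d\theta,
\]
and the second becomes
\[
\int_{-1}^{1}\frac{U_n(x)U_m(x)}{\sqrt{1-x^2}}\,dx=\int_{0}^{\pi}\frac{\sin(n\theta)\sin(m\theta)}{\sin^2\theta}\,d\theta.
\]

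Next I would linearize each integrand. For the first integral, $\cos(n\theta)\cos(m\theta)=\tfrac12\bigl(\cos(n+m)\theta+\cos(n-m)\theta\bigr)$ and $\sin^2\theta=\tfrac12\bigl(1-\cos 2\theta\bigr)$, so the integrand is a rational-coefficient combination of the functions $\cos(k\theta)$ with $k\in\{\,n+m,\;n-m,\;n+m\pm2,\;n-m\pm2\,\}$; on $[0,\pi]$ one has $\int_0^\pi\cos(k\theta)\,d\theta=0$ for every integer $k\neq0$ and $=\pi$ for $k=0$. For the second integral, $\sin(n\theta)/\sin\theta$ is itself a polynomial in $\cos\theta$ (a Chebyshev polynomial of the second kind), so the integrand is again a polynomial in $\cos\theta$ that linearizes into the same $\cos(k\theta)$ pieces. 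In both cases the diagonal case $m=n$ contributes the $k=0$ summand and therefore a strictly positive value, which settles the ``$\neq 0$'' branch of each alternative.

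The step I expect to be the real obstacle is the off-diagonal vanishing, and here the bookkeeping exposes a genuine subtlety rather than a routine cancellation. For $n\neq m$ the only way an index in the list above can collapse to $k=0$ is through $n-m\pm2=0$, that is $|n-m|=2$; when this occurs an extra $k=0$ term survives, so the first integral does not vanish (for instance $\int_{-1}^{1}T_1(x)T_3(x)\sqrt{1-x^2}\,dx=-\pi/8$), and the same accident affects the second integral, e.g. $\int_{-1}^{1}U_1(x)U_3(x)/\sqrt{1-x^2}\,dx=\int_0^\pi(4\cos^2\theta-1)\,d\theta=\pi$. The resolution is that the orthogonalizing weights for the Chebyshev families are the reciprocals of those printed, namely $1/\sqrt{1-x^2}$ for $T_n$ and $\sqrt{1-x^2}$ for $U_n$; with this pairing the substitution reduces the first identity to $\int_0^\pi\cos(n\theta)\cos(m\theta)\,d\theta$ and the second to $\int_0^\pi\sin(n\theta)\sin(m\theta)\,d\theta$, both of which are $0$ for $n\neq m$ by the elementary orthogonality of $\{\cos k\theta\}$ and $\{\sin k\theta\}$ on $[0,\pi]$. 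I would therefore present the trigonometric reduction exactly as above and record the orthogonality in this corrected pairing of polynomial and weight, for which the off-diagonal integrals vanish in a single line.
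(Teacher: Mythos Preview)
Your approach is exactly the one the paper indicates: the paper does not give a proof at all, only the remark that ``the proof follows directly from the identity $T_n(\cos x)=\cos(nx)$'' together with a reference to Rivlin. Your substitution $x=\cos\theta$ and the identities $T_n(\cos\theta)=\cos(n\theta)$, $U_n(\cos\theta)=\sin(n\theta)/\sin\theta$ (for the normalization $U_0=0$, $U_1=1$ used in Table~\ref{familiarfibonacci}) are precisely what that remark intends.

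More importantly, your diagnosis of the obstacle is correct and is not a gap in your argument but an error in the statement. As printed, the theorem pairs $T_n$ with the weight $\sqrt{1-x^2}$ and $U_n$ with $1/\sqrt{1-x^2}$; your counterexamples
\[
\int_{-1}^{1}T_1(x)T_3(x)\sqrt{1-x^2}\,dx=-\frac{\pi}{8},\qquad
\int_{-1}^{1}\frac{U_1(x)U_3(x)}{\sqrt{1-x^2}}\,dx=\pi
\]
are valid and show the stated off-diagonal vanishing fails. The correct pairing is the one you propose, $T_n$ with $1/\sqrt{1-x^2}$ and $U_n$ with $\sqrt{1-x^2}$, for which the trigonometric reduction gives $\int_0^\pi\cos(n\theta)\cos(m\theta)\,d\theta$ and $\int_0^\pi\sin(n\theta)\sin(m\theta)\,d\theta$ and the result is immediate. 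This is also consistent with the theorem's own preamble, which (inconsistently with Table~\ref{familiarfibonacci}) writes $\mathcal{F}_n=T_n$ and $\mathcal{L}_n=U_n$; the labels $T_n$ and $U_n$ have simply been interchanged somewhere. Note too that the paper's only downstream use of this theorem, in the proof of Proposition~\ref{GeneralChebCase}, actually invokes the \emph{correct} relation ($U_n$ against the weight $\sqrt{1-x^2}$), so the error is confined to the displayed statement and does not propagate. Your write-up, presenting the corrected pairing, is the right fix.
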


We need some conditions to obtain the orthogonality for $\Ft{n}(x)$ and $\Lt{n}(x)$. Thus, we need that $g(x)$ be equal to $-4k$ with $k$ a negative real number, but for $d(x)$ we have more freedom. Thus, $d(x)$ is as defined in  \eqref{Fibonacci;general:FT} and \eqref{Fibonacci;general:LT}. 

\begin{proposition}\label{GeneralChebCase} Let $g(x)=-4k$, and let $d(x)$ be as defined in  \eqref{Fibonacci;general:FT} and \eqref{Fibonacci;general:LT}, where $k\in \mathbb{R}_{<0}$. If there are constants $s_1$ and  $s_2$ such that $d(s_1) = -\sqrt{-4k}$ and $d(s_2) = \sqrt{-4k}$ with $4k+d^2(x)\le 0$ for every $x$ in the interval given by $s_1$ and $s_2$,  then for $n\not=m$, the following hold:  
$$
\int_{s_1}^{s_2} \Ft{n}(x) \Ft{m}(x) \sqrt{-4k-d^2(x)}d'(x) dx =0, 
$$
and 
$$
\int_{s_1}^{s_2} \dfrac{\Lt{n}(x) \Lt{m}(x)}{\sqrt{-4k-d^2(x)}}d'(x) dx =0.
$$
\end{proposition}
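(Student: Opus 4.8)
The plan is to carry both integrals, by a single substitution, back to the orthogonality of the Chebyshev polynomials recorded in Theorem~\ref{ChebichevTypeFuctionFibo}. Here $g$ equals the constant $k<0$ (so the weight $\sqrt{-4k-d^{2}(x)}$ is $\sqrt{-(d^{2}(x)+4g(x))}=|a(x)-b(x)|$ by \eqref{ConsequecesBinetFormulas}). Set $c:=\sqrt{-4k}=2\sqrt{-k}>0$ and $u=u(x):=d(x)/c$. The hypotheses $d(s_{1})=-c$ and $d(s_{2})=c$ give $u(s_{1})=-1$ and $u(s_{2})=1$, while $4k+d^{2}(x)\le 0$ on the interval bounded by $s_{1},s_{2}$ says exactly that $|u(x)|\le 1$ there, so $-4k-d^{2}(x)=c^{2}\bigl(1-u(x)^{2}\bigr)\ge 0$ and all integrands are real. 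Since the substitution identity $\int_{s_{1}}^{s_{2}}F\bigl(u(x)\bigr)u'(x)\,dx=\int_{-1}^{1}F(u)\,du$ is merely the chain rule and the fundamental theorem of calculus, $d$ need not be monotone; for the familiar families of Table~\ref{familiarfibonacci}, $d$ is linear and $u$ is an honest change of variables.

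Two short computations then do the work. First, from $d(x)=cu$ and $d'(x)\,dx=c\,du$,
\[
\sqrt{-4k-d^{2}(x)}\,d'(x)\,dx=c^{2}\sqrt{1-u^{2}}\,du,\qquad
\frac{d'(x)\,dx}{\sqrt{-4k-d^{2}(x)}}=\frac{du}{\sqrt{1-u^{2}}},
\]
and the transformed integrals converge (the second is improper only at $u=\pm 1$). Second, after the substitution $\Ft{n}$ and $\Lt{n}$ become nonzero constant multiples of the Chebyshev polynomials of Table~\ref{familiarfibonacci}: since $d(x)=2\sqrt{-k}\,u$ and the coefficient $g$ is constant, a one-line induction on the recurrences \eqref{Fibonacci;general:FT} and \eqref{Fibonacci;general:LT} shows that $W_{n}:=\Ft{n}/(-k)^{(n-1)/2}$ satisfies $W_{0}=0$, $W_{1}=1$, $W_{n}=2uW_{n-1}-W_{n-2}$, hence $\Ft{n}(x)=(-k)^{(n-1)/2}U_{n}\bigl(u(x)\bigr)$, and $Z_{n}:=\Lt{n}/\bigl(p_{0}(-k)^{n/2}\bigr)$ satisfies $Z_{0}=1$, $Z_{1}=u$, $Z_{n}=2uZ_{n-1}-Z_{n-2}$, hence $\Lt{n}(x)=p_{0}(-k)^{n/2}T_{n}\bigl(u(x)\bigr)$. (Equivalently this follows from the Binet data \eqref{BinetFormulaUno}--\eqref{ConsequecesBinetFormulas}: with $u=\cos\theta$ one has $a(x)=\sqrt{-k}\,e^{i\theta}$ and $b(x)=\sqrt{-k}\,e^{-i\theta}$, the factorization behind $T_{n}(\cos\theta)=\cos n\theta$ and $U_{n}(\cos\theta)=\sin n\theta/\sin\theta$.)

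Combining the two computations,
\[
\int_{s_{1}}^{s_{2}}\Ft{n}(x)\Ft{m}(x)\sqrt{-4k-d^{2}(x)}\,d'(x)\,dx
=4(-k)^{\frac{n+m}{2}}\int_{-1}^{1}U_{n}(u)\,U_{m}(u)\sqrt{1-u^{2}}\,du,
\]
\[
\int_{s_{1}}^{s_{2}}\frac{\Lt{n}(x)\Lt{m}(x)}{\sqrt{-4k-d^{2}(x)}}\,d'(x)\,dx
=p_{0}^{2}(-k)^{\frac{n+m}{2}}\int_{-1}^{1}\frac{T_{n}(u)\,T_{m}(u)}{\sqrt{1-u^{2}}}\,du,
\]
and for $n\ne m$ both right-hand sides vanish by the orthogonality of the Chebyshev polynomials (Theorem~\ref{ChebichevTypeFuctionFibo}).

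The step that takes genuine care is the identification of $\Ft{n}$ and $\Lt{n}$ with $U_{n}(u(\cdot))$ and $T_{n}(u(\cdot))$ after the substitution: one must line up the initial-value and index conventions of Table~\ref{familiarfibonacci} and keep the constants $(-k)^{(n-1)/2}$ and $p_{0}(-k)^{n/2}$ straight (they are immaterial to the stated vanishing but are worth recording for the $n=m$ case). Nonnegativity of the radicand on $[s_{1},s_{2}]$, convergence of the transformed integrals, and the change of variables itself are routine.
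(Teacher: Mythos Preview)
Your proof is correct and follows the same route as the paper: write $\Ft{n}$ and $\Lt{n}$ as scalar multiples of the Chebyshev polynomials evaluated at $h(x)=d(x)/\sqrt{-4k}$, then substitute $u=h(x)$ and invoke Theorem~\ref{ChebichevTypeFuctionFibo}. The only cosmetic differences are that the paper establishes the identification $\Ft{n}(x)=\text{const}\cdot U_{n}(h(x))$ via the Binet formula \eqref{BinetFormulaUno} rather than your induction on the recurrence, and the paper leaves the Lucas-type case to the reader whereas you carry it out; your reading $g(x)=k$ (so that the weight is $\sqrt{-(d^{2}+4g)}$) is exactly the one the paper's own computation uses.
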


\begin{proof} We prove the Fibonacci type; note that the Lucas type follows similarly, and we omit the details. Let $U_n(h(x))$ denote the composition of the Chebyshev polynomials with $h(x) := d(x)/\sqrt{-4k}$. From this and equation \eqref{BinetFormulaUno}, it follows that: 
\begin{eqnarray*}   U_n(h(x))&:=&\dfrac{\left(h(x)+\sqrt{h^2(x)-1}\right)^n-\left(h(x)-\sqrt{h^2(x)-1}\right)^n}{2\sqrt{h^2(x)-1}}\\
&=&\dfrac{\left(\dfrac{d(x)}{\sqrt{-4k}}+\sqrt{\dfrac{d^2(x)}{-4k}-1}\right)^n-\left(\dfrac{d(x)}{\sqrt{-4k}}-\sqrt{\dfrac{d^2(x)}{-4k}-1}\right)^n}{2\sqrt{\dfrac{d(x)}{-4k}-1}}.
\end{eqnarray*}
After simplification we have that $U_n(h(x))$ is equal to
$$\dfrac{\left(d(x)+\sqrt{d^2(x)+4k}\right)^n-\left(d(x)-\sqrt{d^2(x)+4k}\right)^n}{2\sqrt{d^2(x)+4k}}(-4k)^{(1-n)/2}.$$

Simplifying, we have $ U_n(h(x))=\dfrac{\mathcal{F}_n(x)}{(-4k)^{(n-1)/2}}$. Therefore, 

$$\mathcal{F}_n(x)=(-4k)^{(n-1)/2} U_n(h(x)).$$  This implies that 
$\int_{s_1}^{s_2} \mathcal{F}_n(x) \mathcal{F}_m(x) \sqrt{-4k-d^2(x)}d'(x)dx$ is equal to $$(-4k)^{(n+m-1)/2}\int_{s_1}^{s_2} U_n(h(x)) U_m(h(x)) \sqrt{1-h^2(x)}d'(x) dx.$$ 

Using the $u$-substitution with $u=h(x)$ we arrive to Theorem \ref{ChebichevTypeFuctionFibo}. This implies that 
 $$\int_{s_1}^{s_2} \mathcal{F}_n(x) \mathcal{F}_m(x) \sqrt{-4k-d^2(x)}d'(x)dx=
 \left\{ 
        \begin{array}{llc} 0 & \mbox{if \quad $n\not=m$ } \\ \dfrac{\pi(-4k)^{(n+m)/2}}{2} &\mbox{ otherwise.}\end{array} \right.$$
 This completes the proof.
\end{proof}

The following corollary shows that the last eight polynomials in Table \ref{familiarfibonacci} are orthogonal. This was one of our main interests in this paper.

\begin{corollary} \label{CoroChebichevTypeFuction}  Let $d=c x^t+h$ and $g(x)=-k/4$, where $c, h, k, t \in \mathbb{Z}$ with $c\ne 0$,  $k$ and $t>0$, and $t$ is odd. Define   $s_2:=\left(\frac{\sqrt{k}-h}{c}\right)^{1/t}$, $s_1:= \left(\frac{\sqrt{k}+h}{c}\right)^{1/t}$, and $\omega(x):=\sqrt{k-d^2} \; x^{t-1}$. Then,  
\begin{enumerate}
\item \label{CoroChebichevTypeFuctionFibo} 

\[ \int_{-s_1}^{s_2} \Ft{n}(x) \Ft{m}(x)  \omega(x)\;  dx= \begin{cases} 
       0 &\text{ if } m \ne n, \\
     \ne 0 &\text{ if } m = n.
   \end{cases}
\]
						    
\item \label{CoroChebichevTypeFuctionLucas} 

\[ \int_{-s_1}^{s_2}  \frac{\Lt{n}(x) \Lt{m}(x)}{\omega(x)}\;  dx= \begin{cases} 
       0 &\text{ if } m \ne n, \\
     \ne 0 &\text{ if } m = n.
   \end{cases}
\]
\end{enumerate}

\end{corollary}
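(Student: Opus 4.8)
The plan is to reduce Corollary~\ref{CoroChebichevTypeFuction} to Proposition~\ref{GeneralChebCase} by a direct substitution and bookkeeping of constants. First I would rewrite the hypotheses in the notation of the Proposition: set $d(x)=cx^t+h$ and observe that the condition $g(x)=-k/4$ means the Proposition's parameter $-4k'$ equals $-k$, i.e.\ $k'=k/4$, so $\sqrt{-4k'}=\sqrt{k}$ (after fixing a suitable sign convention for $k$; I would state explicitly the sign hypothesis on $k$ needed so that $\sqrt{k}$ and $\sqrt{-4k'}$ make sense, matching the Proposition's requirement $k'\in\mathbb{R}_{<0}$). The weight in the Proposition is $\sqrt{-4k'-d^2(x)}\,d'(x)=\sqrt{k-d^2(x)}\cdot c t x^{t-1}$, which equals $ct\,\omega(x)$ with $\omega(x)=\sqrt{k-d^2(x)}\,x^{t-1}$ as defined in the Corollary; since $ct$ is a nonzero constant it does not affect vanishing of the integral, so Part~\ref{CoroChebichevTypeFuctionFibo} follows once the interval of integration is matched. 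The analogous computation with $d'(x)=ctx^{t-1}$ in the numerator handles Part~\ref{CoroChebichevTypeFuctionLucas}.

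Next I would verify the endpoints. The Proposition requires constants $s_1,s_2$ with $d(s_1)=-\sqrt{-4k'}=-\sqrt{k}$ and $d(s_2)=\sqrt{-4k'}=\sqrt{k}$. Solving $cx^t+h=\sqrt{k}$ gives $x=\big(\tfrac{\sqrt{k}-h}{c}\big)^{1/t}$ and solving $cx^t+h=-\sqrt{k}$ gives $x=\big(\tfrac{-\sqrt{k}-h}{c}\big)^{1/t}$. Here I must reconcile this with the Corollary's stated $s_1=\big(\tfrac{\sqrt{k}+h}{c}\big)^{1/t}$ and the integration limits $-s_1$ to $s_2$: because $t$ is odd, $(-s_1)^t=-s_1^t=-\tfrac{\sqrt{k}+h}{c}$, hence $d(-s_1)=c\cdot\big(-\tfrac{\sqrt{k}+h}{c}\big)+h=-\sqrt{k}$, exactly the required value at the left endpoint. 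So the Proposition's $s_1$ equals the Corollary's $-s_1$, and the Proposition's $s_2$ equals the Corollary's $s_2$; this is just a relabeling that exploits oddness of $t$. I would also check the sign condition $4k'+d^2(x)=d^2(x)-k\le 0$ on the interval, i.e.\ $|d(x)|\le\sqrt{k}$ for $x$ between $-s_1$ and $s_2$: since $d$ is monotone in $x^t$ and $x\mapsto x^t$ is monotone (t odd), $d$ runs monotonically from $-\sqrt{k}$ to $\sqrt{k}$ across the interval, giving the bound.

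With these identifications in place, both parts of the Corollary are immediate instances of Proposition~\ref{GeneralChebCase}: the integrands coincide up to the nonzero multiplicative constant $ct$, and the limits coincide after the relabeling above, so the integral vanishes for $n\ne m$ and is nonzero (equal to $ct$ times the explicit value $\pi(-4k')^{(n+m)/2}/2 = \pi(k/4)^{(n+m)/2}/2$ up to sign) for $n=m$. I would close by noting that specializing $(c,h,k,t)$ to the parameters read off from Table~\ref{familiarfibonacci} yields the orthogonality of Fermat, Fermat–Lucas, both Chebyshev types, both Morgan–Voyce types, and Vieta and Vieta–Lucas polynomials, which is the advertised conclusion.

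I expect the main obstacle to be purely bookkeeping rather than conceptual: getting the sign conventions for $\sqrt{k}$ and the branch of the $t$-th root consistent, and making sure the endpoint relabeling $s_1^{\mathrm{Prop}} = -s_1^{\mathrm{Cor}}$ genuinely uses $t$ odd and does not secretly require $k>0$ versus the Proposition's $k<0$ convention. A careful statement of which real quantities are assumed positive (likely $k>0$ here, with the Proposition applied to $k' = k/4$ after absorbing a sign, or a direct re-derivation of the Chebyshev substitution with $g(x)=-k/4$) will resolve it; once the constants line up, nothing further is needed.
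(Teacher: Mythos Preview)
Your approach is exactly the paper's: the paper's entire proof is the single line ``set $h(x)$ in the proof of Proposition~\ref{GeneralChebCase} equal to $\frac{cx^t+h}{\sqrt{k}}$,'' and you carry out precisely this reduction, supplying the details (matching $\omega(x)$ to $\sqrt{-4k'-d^2}\,d'(x)$ up to the constant $ct$, solving for the endpoints, and using $t$ odd to reconcile $-s_1$ with the Proposition's left endpoint). Your caveat about the sign conventions on $k$ is well placed---the paper's Proposition has $g(x)=-4k>0$ while the Corollary has $g(x)=-k/4<0$, so one really must re-run the substitution argument rather than invoke the Proposition's statement literally, which is what both you and the paper (implicitly) do.
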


\begin{proof} This proof follows by setting $h(x)$ in the proof of Proposition  \ref{GeneralChebCase} equal to $\frac{cy^t + h}{\sqrt{k}}$. 
\end{proof}

From \cite[Theorem 1.14]{Dominguez}, we have that when $t=1$ in the previous theorem, the weight $w(x)$ must be unique. 

\begin{corollary}  \label{OthogonalSameParity}  
Let $d(x)=c x^t+h$ and $g(x)=k/4$, with $c, k, h, t \in \mathbb{Z}$ and $c\ne 0$,  $k, t>0$, with $n \equiv m \pmod{2}$. If  $\omega(x)$ is a weight function, then for any $a > 0$ he following statements hold:

\begin{enumerate}
\item \label{OthogonalSameParityFibo} 

$$ \int_{-a}^{a} \Ft{n}(x) \Ft{m}(x) \omega(x) \, dx\ne 0.$$

\item \label{OthogonalSameParityLucas} 

$$\int_{-a}^{a} \frac{\Lt{n}(x) \Lt{m}(x)}{\omega(x)} \, dx\ne 0.$$

\end{enumerate}

\end{corollary}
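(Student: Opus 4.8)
The plan is to reduce the claim to a positivity statement about the integral of a single squared polynomial, exploiting the parity symmetry forced by Lemma \ref{EvenOddFunctionTest}. First I would observe that since $t$ is a positive integer and $d(x)=cx^t+h$, the function $d(x)$ is an odd function precisely when $t$ is odd and $h=0$; but in the generality stated here we only know $g(x)=k/4$ is a nonzero constant (hence even) and $d(x)$ is a polynomial. The key point is that when $d(x)$ is odd and $g(x)$ even, Lemma \ref{EvenOddFunctionTest} gives $\Ft{n}(-x)=(-1)^{n+1}\Ft{n}(x)$, so the product $\Ft{n}(x)\Ft{m}(x)$ satisfies $\Ft{n}(-x)\Ft{m}(-x)=(-1)^{n+m}\Ft{n}(x)\Ft{m}(x)$. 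Under the hypothesis $n\equiv m\pmod 2$ this product is an \emph{even} function. If moreover $\omega(x)$ is chosen even and nonnegative on $[-a,a]$, then the integrand is even and nonnegative, so
\[
\int_{-a}^{a}\Ft{n}(x)\Ft{m}(x)\,\omega(x)\,dx \;=\; 2\int_{0}^{a}\Ft{n}(x)\Ft{m}(x)\,\omega(x)\,dx,
\]
and to show this is nonzero it suffices to show the integrand does not vanish identically on $[0,a]$.

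Next I would argue nonvanishing as follows. Write $\Ft{n}=(-4k\text{-type})^{(n-1)/2}U_n(h(x))$-style identity only when applicable; more robustly, note that $\Ft{n}(x)$ and $\Ft{m}(x)$ are nonzero polynomials (they have degrees $(n-1)\deg d$ and $(m-1)\deg d$ respectively by Lemma \ref{genHogattLemma}, since the leading term is $d(x)^{n-1}$), so the product $\Ft{n}\Ft{m}$ is a nonzero polynomial with only finitely many real roots. Since $\omega(x)>0$ on $(0,a)$ except possibly at finitely many points (it is a weight function), the integrand $\Ft{n}(x)\Ft{m}(x)\omega(x)$ is a continuous function that is nonnegative on $[0,a]$ — here I use that the even extension argument already told us the sign is constant — and not identically zero. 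An integral of a nonnegative continuous function that is positive on a set of positive measure is strictly positive, giving the strict inequality. The Lucas-type statement \ref{OthogonalSameParityLucas} is identical, using Part \ref{EvenOddFunctionTestLucas} of Lemma \ref{EvenOddFunctionTest} and the fact that $1/\omega(x)$ is positive wherever $\omega$ is.

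The main obstacle I anticipate is the sign issue: the clean argument above needs $\Ft{n}(x)\Ft{m}(x)\ge 0$ on $[0,a]$, but a priori this product can change sign on $(0,a)$ (the polynomials $\Ft{n}$ have real zeros in general). What the parity hypothesis $n\equiv m\pmod 2$ actually buys is only that the product is an even function, i.e. symmetric about $0$ — it does \emph{not} by itself force a constant sign. So the real content is: for $d(x)=cx^t+h$ and $g$ constant, one must check that $\Ft{n}$ and $\Ft{m}$ have the \emph{same} sign pattern on $[0,a]$, or more precisely that $\int_{-a}^a \Ft{n}\Ft{m}\,\omega\,dx\ne 0$ for \emph{every} $a>0$. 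I would handle this by invoking the root structure of GFPs established in Section \ref{Section5} (the zeros of $\Ft{n}$ are obtained from those of the classical Chebyshev/Fibonacci polynomials via the substitution through $d(x)$), which pins down exactly where sign changes occur; combined with the fact that $\deg(\Ft{n}\Ft{m})$ is even and its leading coefficient is positive, one gets that the even polynomial $\Ft{n}(x)\Ft{m}(x)$ is positive for $|x|$ large, and a parity/interlacing argument shows the weighted integral over the symmetric interval cannot be zero. Alternatively, and more simply, if one additionally assumes $a$ is small enough (or interprets "$\omega$ a weight function" as forcing support considerations), the product is sign-definite near $0$ and the result is immediate; I would make explicit whichever hypothesis on $a$ and $\omega$ is actually intended, since as literally stated the "$a>0$ arbitrary" clause is the delicate one.
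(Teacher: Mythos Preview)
Your approach has a genuine gap, which you correctly identify yourself: the parity route via Lemma \ref{EvenOddFunctionTest} (which in any case requires $h=0$ and $t$ odd, neither of which is assumed here) only shows that $\Ft{n}\Ft{m}$ is an even function, not that it is nonnegative, so the integral over $[-a,a]$ could in principle vanish. Your suggested repairs via root interlacing from Section \ref{Section5} or restricting to small $a$ do not recover the statement for arbitrary $a>0$, and the Chebyshev substitution you allude to applies when $g<0$, not here.

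The missing idea, and the paper's actual argument, is to exploit the hypothesis $k>0$ through Lemma \ref{genHogattLemma} directly. Since $g(x)=k/4>0$, the expansion
\[
\Ft{n}(x)=\sum_{i=0}^{\lfloor (n-1)/2\rfloor}\binom{n-i-1}{i}\,d(x)^{\,n-2i-1}\,g(x)^{i}
\]
is a polynomial in $d(x)$ with \emph{positive} coefficients, and all exponents $n-2i-1$ have the parity of $n-1$. Thus $\Ft{n}(x)>0$ for all $x$ when $n$ is odd, while for $n$ even $\Ft{n}(x)$ has the sign of $d(x)$. Under $n\equiv m\pmod 2$ the product $\Ft{n}(x)\Ft{m}(x)$ is therefore nonnegative for every real $x$, vanishing only at the finitely many zeros of $d(x)$; integrating against the positive weight $\omega$ over $[-a,a]$ gives a strictly positive value for every $a>0$. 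No parity assumption on $d$ and no evenness assumption on $\omega$ is needed.
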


\begin{proof}  We prove Part \ref{OthogonalSameParityFibo}; Part \ref{OthogonalSameParityLucas} is similar, and we omit it.

Since $n \equiv m \pmod{2}$ and $w(x)$ is positive, by utilizing the binomial representation of $\Ft{n}(x)$ given in Lemma \ref{genHogattLemma}, we observe that $\int_{-a}^{a} \Ft{n}(x) \Ft{m}(x) w(x) dx > 0$ for any $a > 0$.  
\end{proof}

The proof of the following proposition is identical to that of Proposition \ref{OthogonalDiffParity}; so, we omit it. 

\begin{proposition}\label{ComplemOthogonal}
Let $\omega$ be an even weight, let $G_n(x)$ be either $\Ft{n}(x)$ or $\Lt{n}(x), $ and 
let $W_1= \lspan (B_1)$ where $B_1=\{G_{n}(x): n \text{ is even} \}$ and let $W_2= \lspan (B_1)$ where  $B_2=\{G_{n}(x): n \text{ is odd} \}$. If $d(x)$ is an odd function and $g(x)$ is an even function, then  $W_2$ is the orthogonal complement of $W_1$. 
\end{proposition}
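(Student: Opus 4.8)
The plan is to recognize the claim as a statement about odd and even functions inside the inner product space $V=\lspan\{G_n(x):n\ge 1\}$ equipped with $\langle f,h\rangle=\int_{-a}^{a}f(x)h(x)\,\omega(x)\,dx$, the weight $\omega$ being even and hence supported on an interval $[-a,a]$ symmetric about the origin. First I would invoke Lemma~\ref{EvenOddFunctionTest}: since $d(x)$ is odd and $g(x)$ is even, every $G_n$ — whether it is $\Ft{n}$ or $\Lt{n}$ — satisfies $G_n(-x)=(-1)^{n+1}G_n(x)$. Thus $G_n$ is an odd function when $n$ is even and an even function when $n$ is odd, so every element of $W_1$ is an odd function and every element of $W_2$ is an even function.

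With that in hand, the orthogonality $W_2\subseteq W_1^{\perp}$ follows by exactly the computation in Proposition~\ref{OthogonalDiffParity}: for $f\in W_1$ and $h\in W_2$ the integrand $f(x)h(x)\omega(x)$ is a product odd $\times$ even $\times$ even, hence an odd function, so $\langle f,h\rangle=\int_{-a}^{a}f(x)h(x)\omega(x)\,dx=0$ after the substitution $u=-x$. This is the part of the argument that is ``identical to that of Proposition~\ref{OthogonalDiffParity}'' and it already shows that $W_1$ and $W_2$ are mutually orthogonal.

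To promote this to the assertion that $W_2$ is the \emph{full} orthogonal complement of $W_1$ in $V$, I would add two observations. First, $W_1\cap W_2=\{0\}$, because the only function that is simultaneously even and odd is $0$, and $B_1\cup B_2$ spans $V$, so $V=W_1\oplus W_2$. Second, the form $\langle\cdot,\cdot\rangle$ is positive definite on $V$: since $\omega$ is a (positive) weight and each $G_n$ is continuous, $\langle w,w\rangle=\int_{-a}^{a}w(x)^2\omega(x)\,dx=0$ forces $w\equiv 0$. Then, given $v\in W_1^{\perp}$, write $v=w_1+w_2$ with $w_i\in W_i$; the cross term $\langle w_2,w_1\rangle$ vanishes by the previous paragraph, so $0=\langle v,w_1\rangle=\langle w_1,w_1\rangle$, hence $w_1=0$ and $v=w_2\in W_2$. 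Therefore $W_1^{\perp}=W_2$.

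The only genuine subtlety, and the point I would be most careful about, is fixing the ambient space and the non-degeneracy of the form: one must pin down that the integration is over an interval symmetric about $0$ (which is exactly what ``$\omega$ even'' is meant to provide) and that $\omega>0$ on a set of positive measure, so that a polynomial $w$ with $\langle w,w\rangle=0$ is identically zero. Everything else is parity bookkeeping already carried out in Lemma~\ref{EvenOddFunctionTest} and Proposition~\ref{OthogonalDiffParity}, which is why the authors can legitimately say the proof is the same.
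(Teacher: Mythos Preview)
Your proof is correct, and the parity/substitution argument you give for mutual orthogonality is exactly what the paper intends when it says the proof is identical to that of Proposition~\ref{OthogonalDiffParity}. You in fact go further than the paper: the direct-sum decomposition $V=W_1\oplus W_2$ together with positive definiteness of the form is what is needed to upgrade ``$W_1\perp W_2$'' to ``$W_2=W_1^{\perp}$'', and the paper simply omits this step.
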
 

Corollary \ref{OthogonalSameParity} implies that the first five polynomials in Table \ref{familiarfibonacci} are non-orthogonal. However, Proposition \ref{ComplemOthogonal} shows that these polynomials, when they have odd subscripts and are of Lucas type, form the orthogonal complement of the polynomials with even subscripts. Similarly, when they are of Fibonacci type, the polynomials with even subscripts are the orthogonal complement of those with odd subscripts. These results are now stated formally in the  Corollary \ref{OthogonalComplementSameParity}. 
For this corollary, we specify special conditions for $d(x)$ and $g(x)$ given in \eqref{Fibonacci;general:FT} and \eqref{Fibonacci;general:LT} that are suitable for the polynomials given in Table \ref{familiarfibonacci}.

From Proposition \ref{ComplemOthogonal}, combined with Corollary \ref{OthogonalSameParity} and Proposition \ref{OthogonalDiffParity}, we know that the angle between two classical Fibonacci polynomials with distinct parity in their subscripts is $\pi/2$, while those with the same parity are not orthogonal. This raises the following questions: what is the angle between two classical Fibonacci polynomials with the same parity? Does it depend on the subscripts (that is, the degree of the polynomials)? If so, is it possible to generalize the result to non-orthogonal GFPs?

\begin{corollary}  \label{OthogonalComplementSameParity} 
Let $d(x)=c x^t$ and $g(x)=k/4$ with $k, t>0$, $t$ odd, and $c\ne 0$. Then with these conditions on the polynomials given in \eqref{Fibonacci;general:FT} and \eqref{Fibonacci;general:LT}, we have: 
\begin{enumerate}
    \item if  $W_1= \lspan (B_1)$ where $B_1=\{\Ft{n}(x): n \text{ is even} \}$ and let  $B_2=\{\Ft{n}(x): n \text{ is odd} \}$, then $B_2$ is the orthogonal complement of $B_1$.

    \item If  $W^\prime_1= \lspan (B_1^\prime)$ where $B_1^\prime=\{\Lt{n}(x): n \text{ is even} \}$ and let  $B_2^\prime=\{\Lt{n}(x): n \text{ is odd} \}$, then $B_2^\prime$ is the orthogonal complement of $B_1^\prime$.

\end{enumerate}
\end{corollary}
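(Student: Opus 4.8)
The plan is to reduce the statement to the parity machinery of Lemma~\ref{EvenOddFunctionTest} and Propositions~\ref{OthogonalDiffParity} and~\ref{ComplemOthogonal}, and then to supply the one extra ingredient those results do not by themselves give, namely that the two spans actually \emph{exhaust} the ambient space. First I would check the parity hypotheses: since $t$ is odd, $d(-x)=c(-x)^{t}=-cx^{t}=-d(x)$, so $d$ is an odd function, and $g(x)=k/4$ is constant, hence even. Thus Lemma~\ref{EvenOddFunctionTest} applies and yields $\Ft{n}(-x)=(-1)^{n+1}\Ft{n}(x)$ and $\Lt{n}(-x)=(-1)^{n+1}\Lt{n}(x)$; in particular each $\Ft{n}$ (and each $\Lt{n}$) with $n$ even is an odd function, and each with $n$ odd is an even function. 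I would then work with an inner product $\langle f,h\rangle=\int_{-a}^{a}f(x)h(x)\,\omega(x)\,dx$ on a symmetric interval, for an even weight $\omega$ that is strictly positive almost everywhere on $(-a,a)$; note that $d^{2}(x)=c^{2}x^{2t}$ and $x^{t-1}$ are both even because $t$ is odd, so standard choices such as $\omega(x)=\sqrt{k-d^{2}(x)}\,x^{t-1}$ are indeed even, and for the Lucas-type statement one replaces $\omega$ by $1/\omega$.

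Next I would establish the two inclusions defining ``orthogonal complement''. For $\lspan(B_{2})\subseteq\lspan(B_{1})^{\perp}$: if $n$ is odd and $m$ is even then $n\not\equiv m\pmod 2$, so by Lemma~\ref{EvenOddFunctionTest} the product $\Ft{n}(x)\Ft{m}(x)$ is an odd function, hence $\Ft{n}\Ft{m}\,\omega$ is odd and $\langle\Ft{n},\Ft{m}\rangle=\int_{-a}^{a}\Ft{n}(x)\Ft{m}(x)\,\omega(x)\,dx=0$; this is precisely the weighted form of Proposition~\ref{OthogonalDiffParity} and of Proposition~\ref{ComplemOthogonal}. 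For the reverse inclusion I would use the direct-sum decomposition $V:=\lspan(B_{1}\cup B_{2})=\lspan\{\Ft{n}:n\ge 1\}=\lspan(B_{1})\oplus\lspan(B_{2})$: the two summands intersect only in $\{0\}$ because one consists of odd and the other of even functions, and every element of $V$ splits uniquely along subscript parity because the $\Ft{n}$ are linearly independent, having the pairwise distinct degrees $\deg\Ft{n}=t(n-1)$ (read off from the binomial formula of Lemma~\ref{genHogattLemma}, using $\deg d=t>0$). Since $\omega>0$ off a finite set the inner product is positive definite on $V$, so if $v=w_{1}+w_{2}\in V$ with $v\perp\lspan(B_{1})$ and $w_{i}\in\lspan(B_{i})$, then $0=\langle v,w_{1}\rangle=\langle w_{1},w_{1}\rangle$ forces $w_{1}=0$, whence $v=w_{2}\in\lspan(B_{2})$. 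This gives part~(1); part~(2) is the same argument with $\Lt{n}$ for $\Ft{n}$, $1/\omega$ for $\omega$, and Lemma~\ref{genHogattLucasLemma} for the degree count.

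I expect the delicate point to be exactly this upgrade from ``$\lspan(B_{2})$ is orthogonal to $\lspan(B_{1})$'' to ``$\lspan(B_{2})$ \emph{is} the orthogonal complement of $\lspan(B_{1})$'': Propositions~\ref{OthogonalDiffParity} and~\ref{ComplemOthogonal} supply only the first, and one must add the direct-sum-by-parity decomposition of $V$ together with positive-definiteness of the weighted inner product to obtain equality. Everything else — the parities of $d$ and $g$, the evenness of the weight, and the degree bookkeeping giving linear independence — is routine, and the Lucas-type case runs strictly in parallel to the Fibonacci-type case.
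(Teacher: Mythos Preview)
Your proof is correct and, for the actual ``orthogonal complement'' assertion, arguably more careful than the paper's own argument, but the route differs. The paper also splits into two cases on the parity of $n$ and $m$. For $n\not\equiv m\pmod 2$ it simply invokes Proposition~\ref{ComplemOthogonal}, as you do. For $n\equiv m\pmod 2$, however, the paper does \emph{not} argue via a direct-sum decomposition and positive-definiteness; instead it uses the binomial representation of Lemma~\ref{genHogattLemma} together with the hypothesis $g(x)=k/4>0$ to observe that $\Ft{n}(x)\Ft{m}(x)\ge 0$ on the whole interval (every term in each expansion carries the same sign), so that $\int_{-a}^{a}\Ft{n}(x)\Ft{m}(x)\,w(x)\,dx>0$. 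This yields a stronger pointwise fact---no two same-parity basis elements are orthogonal---which is precisely what the surrounding discussion needs to declare the full sequence non-orthogonal. Your argument, by contrast, is cleaner about the reverse inclusion $W_1^{\perp}\subseteq\lspan(B_2)$: the paper's same-parity case only treats pairs of basis elements and does not by itself rule out a nonzero linear combination in $W_1$ lying in $W_1^{\perp}$, whereas your direct-sum-plus-positive-definiteness step does. One small remark: the weight $\omega(x)=\sqrt{k-d^{2}(x)}\,x^{t-1}$ you cite as a ``standard choice'' is borrowed from Corollary~\ref{CoroChebichevTypeFuction}, which treats the case $g(x)=-k/4<0$; here $g(x)=k/4>0$, so that particular formula is not the natural one in this setting, though---as you correctly note---your argument only needs an even weight that is positive almost everywhere.
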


\begin{proof} We divide this proof in two cases:

\textbf{Case $n \equiv m \pmod{2}$:} since $w(x)$ is positive, utilizing the binomial representation of $\Ft{n}(x)$ given in Lemma \ref{genHogattLemma}, we observe that $\int_{-a}^{a} \Ft{n}(x) \Ft{m}(x) w(x) dx > 0$ for any $a > 0$.

\textbf{Case $n \not\equiv m \pmod{2}$:} this case follows from Proposition \ref{ComplemOthogonal}.
\end{proof}

\section{Roots of Generalized Fibonacci Polynomials and their consequences} \label{Section5}

To demonstrate the orthogonality of a sequence of polynomials, the inner product depends on a weight function (a measure). For example, the polynomials presented in Table \ref{familiarfibonacci} provide instances of orthogonal polynomials where the inner product relies on the weight function $w(x)=\sqrt{1-x^2}$ (associated with Chebyshev polynomials) or a shift of this function. Specifically, the  Vieta polynomials possess a weight function of $w(x)=\sqrt{4-x^2}$. 

Our secondary inquiry concerns the feasibility of a family of GFPs where the inner product differs from that of a shifted version of $w(x)=\sqrt{1-x^2}$. We observe from Theorem \ref{FavardThm} that GFPs when $d(x)$ is a linear function with positive coefficients and $g(x)$ a negative constant are not orthogonal, for any measure $\Omega$. However, in this section we provide an alternative proof of this fact. Our proof relies on the roots of the GFP, as detailed in  Corollary \ref{EvenWEight}. 

Theorems \ref{RootsGFFiboType} and \ref{RootsGFLucasType} provide a method for determining the roots of GFPs through the use of the roots of classical Fibonacci and Lucas polynomials. This approach, in particular, enables us to find all the roots of the familiar GFPs listed in Table \ref{familiarfibonacci}.

We use this roots to provide an answer to a question that was remaining unsolved in the previous section. Thus, we show that the GFP provide in previous section are the only orthogonal polynomials. 

For the following results we use some standard notations. However, to avoid ambiguities we recall them here. The classic Fibonacci polynomials and Lucas polynomials are denoted by $F_n$ and $L_n$. To represent the composition of two functions we use $\circ$ and to represent the complex unit $\sqrt{-1}$ we use $i$. 

The roots of some of the polynomials described in Table~\ref{familiarfibonacci} have already been determined. For example, the roots of the Fibonacci and Lucas polynomials are given in \cite{hoggattRoots,WebbParberry}, while the roots of the Morgan-Voyce polynomials of the second kind are presented in \cite{Swamy8}.  

Andr\'e-Jeannin \cite{AndreJeanninII,AndreJeanninI} introduced a two-parameter generalization of the Fibonacci polynomials by defining
\[
U_n(p,q;x)=(x+p)\,U_{n-1}(p,q;x)-q\,U_{n-2}(p,q;x),\qquad n\ge 2,
\]
with \(U_0(p,q;x)=0\) and \(U_1(p,q;x)=1\), and
\[
V_n(p,q;x)=(x+p)\,V_{n-1}(p,q;x)-q\,V_{n-2}(p,q;x),\qquad n\ge 2,
\]
with \(V_0(p,q;x)=2\) and \(V_1(p,q;x)=x+p\). Andr\'e-Jeannin also determined the roots of these polynomials.
 
\begin{lemma} [\cite{hoggattRoots,WebbParberry}]\label{FibonacciIrreducibleRoots}  Let 
$\gamma_j= 2i \cos\frac{j \pi}{n}$ for $j=1,2, \dots, n-1$, where $n \in \mathbb{Z}_{>1}$. Then $\Gamma=\{\gamma_1, \dots, \gamma_{n-1}\}$ are the  roots of the Fibonacci polynomial $F_n(x)$. 
\end{lemma}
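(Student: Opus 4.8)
The plan is to recall the classical factorization of the Fibonacci polynomial $F_n(x)$ and then verify that the proposed numbers $\gamma_j = 2i\cos\frac{j\pi}{n}$ are exactly its roots. The cleanest route uses the Binet formula $F_n(x) = \frac{a^n(x) - b^n(x)}{a(x) - b(x)}$ with $a(x)b(x) = -1$ and $a(x) - b(x) = \sqrt{x^2+4}$. First I would observe that $\gamma$ is a root of $F_n$ (for $n > 1$) precisely when $a^n(\gamma) = b^n(\gamma)$ while $a(\gamma) \ne b(\gamma)$, i.e. when the ratio $a(\gamma)/b(\gamma)$ is a primitive-or-not $n$-th root of unity different from $1$. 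So the strategy is: parametrize $x$ so that $a(x)/b(x)$ traverses the $n$-th roots of unity, solve for $x$, and count.

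The key substitution is $x = 2i\cos\theta$. With this choice $x^2 + 4 = 4 - 4\cos^2\theta = 4\sin^2\theta$, so $\sqrt{x^2+4} = 2\sin\theta$ (up to sign, which is harmless here), and hence $a(x) = \frac{x + \sqrt{x^2+4}}{2} = i\cos\theta + \sin\theta = i(\cos\theta - i\sin\theta) = i e^{-i\theta}$, and similarly $b(x) = i\cos\theta - \sin\theta = i e^{i\theta}$. Then
\[
F_n(2i\cos\theta) = \frac{a^n - b^n}{a - b} = \frac{i^n e^{-in\theta} - i^n e^{in\theta}}{i e^{-i\theta} - i e^{i\theta}} = i^{n-1}\,\frac{e^{-in\theta} - e^{in\theta}}{e^{-i\theta} - e^{i\theta}} = i^{n-1}\,\frac{\sin(n\theta)}{\sin\theta}.
\]
Thus $F_n(2i\cos\theta) = 0$ iff $\sin(n\theta) = 0$ and $\sin\theta \ne 0$, i.e. iff $\theta = \frac{j\pi}{n}$ for $j = 1, \dots, n-1$. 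This yields the $n-1$ values $\gamma_j = 2i\cos\frac{j\pi}{n}$.

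To finish I would note that $F_n(x)$ has degree $n-1$ (immediate from the recurrence or from Lemma~\ref{genHogattLemma}), so these $n-1$ roots are all of them, and I should check they are distinct: $\cos\frac{j\pi}{n}$ for $j = 1,\dots,n-1$ are pairwise distinct since $t \mapsto \cos t$ is injective on $(0,\pi)$. That gives the full set $\Gamma$. The only mild subtlety — and the place to be careful — is the branch choice for $\sqrt{x^2+4}$ when $x$ is imaginary: one should argue that the Binet identity $F_n = (a^n - b^n)/(a-b)$ holds as a polynomial identity in $x$ regardless of which square root is selected (swapping the sign of the root merely swaps $a$ and $b$, leaving the symmetric expression unchanged), so evaluating at $x = 2i\cos\theta$ with either branch is legitimate. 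Alternatively, and perhaps more safely, I would avoid square roots entirely by proving the trigonometric identity $F_n(2i\cos\theta) = i^{n-1}\sin(n\theta)/\sin\theta$ directly by induction on $n$ using the recurrence $F_n(x) = xF_{n-1}(x) + F_{n-2}(x)$ together with $\sin(n\theta) = 2\cos\theta\sin((n-1)\theta) - \sin((n-2)\theta)$; this is the main thing to get right, after which the root count is routine.
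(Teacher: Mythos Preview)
Your argument is correct. The substitution $x=2i\cos\theta$ turning the Binet quotient into $i^{\,n-1}\sin(n\theta)/\sin\theta$ is exactly the classical computation, and your handling of the branch issue (either by noting the symmetry of the Binet expression under $a\leftrightarrow b$, or by redoing it via induction and the identity $\sin(n\theta)=2\cos\theta\sin((n-1)\theta)-\sin((n-2)\theta)$) is the right way to make it rigorous. The distinctness and degree count are fine.

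As for comparison: the paper does not actually supply a proof of this lemma. It is stated with a citation to Hoggatt--Bicknell and Webb--Parberry and then used as a black box in the proof of Theorem~\ref{RootsGFFiboType}. Your write-up is essentially the argument found in those references, so there is no methodological divergence to discuss; you have simply filled in what the paper imports from the literature.
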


\begin{lemma} [\cite{hoggattRoots}]\label{LucasIrreducibleRoots}  Let 
$\tau_j= 2i \cos\frac{(2j+1) \pi}{2n}$ for $j=0,1, \dots, n-1$. Then $T=\{\tau_0, \tau_1, \dots, \tau_{n-1}\}$ are the  roots of $L_n(x)$. 
\end{lemma}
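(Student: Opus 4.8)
The plan is to convert the Binet formula into a closed trigonometric expression for $L_n$ and then read off the zeros by elementary trigonometry, the only real care being a branch choice. Specialising \eqref{BinetFormulaUno} and \eqref{ConsequecesBinetFormulas} to the classical Lucas polynomials (so $d(x)=x$, $g(x)=1$, $\alpha=1$) gives $L_n(x)=a^n(x)+b^n(x)$ with $a(x)+b(x)=x$ and $a(x)b(x)=-1$. I would also record at the outset that $L_n$ is monic of degree $n$, which is immediate by induction from $L_n=xL_{n-1}+L_{n-2}$; hence $L_n$ has exactly $n$ roots counted with multiplicity, and it suffices to produce $n$ distinct numbers annihilating $L_n$.

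The key step is the substitution $x=2i\cos\theta$ for $\theta\in(0,\pi)$. Then $x^2+4=4\sin^2\theta$, and taking $\sqrt{x^2+4}=2\sin\theta$ — legitimate since $\sin\theta>0$ on this range — yields $a(x)=\sin\theta+i\cos\theta=ie^{-i\theta}$ and $b(x)=-\sin\theta+i\cos\theta=ie^{i\theta}$, which indeed satisfy $a+b=2i\cos\theta=x$ and $ab=-1$. Because $L_n$ is symmetric in $a$ and $b$, the choice of branch of the square root is irrelevant to the value $L_n(x)$, so no ambiguity is introduced. We obtain
$$L_n(2i\cos\theta)=a^n(x)+b^n(x)=i^n\bigl(e^{-in\theta}+e^{in\theta}\bigr)=2i^n\cos(n\theta).$$

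Finally I would solve $L_n(2i\cos\theta)=0$, i.e.\ $\cos(n\theta)=0$, which on $(0,\pi)$ forces exactly $\theta=\theta_j:=\dfrac{(2j+1)\pi}{2n}$ with $j=0,1,\dots,n-1$. These $n$ angles are pairwise distinct and lie in $(0,\pi)$, where $\cos$ is strictly decreasing; hence $\tau_j=2i\cos\theta_j$ are $n$ pairwise distinct complex numbers with $L_n(\tau_j)=0$, and in particular $\cos\theta_j\neq\pm1$, so $\tau_j\neq\pm2i$ and the substitution is valid at each. Since $\deg L_n=n$, the set $\{\tau_0,\dots,\tau_{n-1}\}$ is precisely the zero set of $L_n$, which is the assertion. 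The main obstacle is simply the bookkeeping with $\sqrt{x^2+4}$ along the imaginary axis; everything else is direct computation. An alternative, substitution-free route is to observe that $L_n(x)=0$ is equivalent to $(a/b)^n=-1$ (here $b\neq0$ because $ab=-1$), to solve $a/b=e^{i(2j+1)\pi/n}$ among the $2n$-th roots of unity that are not $n$-th roots of unity, and then to recover $x=a+b=a-a^{-1}$; this reproduces the same list $2i\cos\frac{(2j+1)\pi}{2n}$.
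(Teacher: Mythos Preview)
Your proof is correct. The paper does not supply its own proof of this lemma: it is stated with a citation to \cite{hoggattRoots} and then used as input to Theorem~\ref{RootsGFLucasType}, so there is no in-paper argument to compare against. Your route via the Binet formula and the substitution $x=2i\cos\theta$, yielding $L_n(2i\cos\theta)=2i^n\cos(n\theta)$, together with the degree count to conclude that the $n$ distinct values $\tau_j$ exhaust the zero set, is a clean and self-contained derivation.
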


\begin{theorem} \label{RootsGFFiboType} Let  $\Ft{n}(x)$ be a GFP as given in \eqref{Fibonacci;general:FT}. If  
 $r \in \mathbb{C}$ satisfy that $d(r)/\sqrt{g(r)} =\gamma_j$, for some  $j=1,2, \dots, n-1$, where $\gamma_j= 2i \cos\frac{j \pi}{n}$ and $g(r) \ne 0$, then $r$ is a roots of $\Ft{n}(x)$. 
\end{theorem}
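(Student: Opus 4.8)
The plan is to reduce the claim to the known roots of the \emph{classical} Fibonacci polynomial $F_n$ (Lemma \ref{FibonacciIrreducibleRoots}) by means of a change of variable. The crux is the functional identity
$$\Ft{n}(x) = \bigl(\sqrt{g(x)}\bigr)^{\,n-1}\, F_n\!\left(\frac{d(x)}{\sqrt{g(x)}}\right),$$
which I would establish first. There are two equivalent routes. The quickest is to start from the Binet formula \eqref{BinetFormulaUno}: substituting $y = d(x)/\sqrt{g(x)}$ into the classical ``$\alpha,\beta$'' of $F_n$, namely $\bigl(y\pm\sqrt{y^{2}+4}\bigr)/2$, and using \eqref{equivalent} together with $\sqrt{y^{2}+4}=\sqrt{d^{2}(x)+4g(x)}\big/\sqrt{g(x)}$, one finds that these become $a(x)/\sqrt{g(x)}$ and $b(x)/\sqrt{g(x)}$; the powers of $\sqrt{g(x)}$ then telescope in the Binet quotient, yielding the displayed identity. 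Alternatively one can compare the binomial expansion of $\Ft{n}$ in Lemma \ref{genHogattLemma} with that of $F_n$ and factor $\bigl(\sqrt{g(x)}\bigr)^{n-1}$ out of each term $\binom{n-i-1}{i}d(x)^{n-2i-1}g(x)^{i}$; this also shows that, after cancellation, only integer powers of $g(x)$ survive, so the identity is a bona fide polynomial identity in $x$.

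With the identity in hand, the remaining steps are short. I would check that the Binet formula is legitimately applicable at $x=r$, i.e.\ that $a(r)\ne b(r)$, equivalently $d^{2}(r)+4g(r)\ne 0$: from $d(r)/\sqrt{g(r)}=\gamma_j=2i\cos\frac{j\pi}{n}$ one gets $d^{2}(r)/g(r)=-4\cos^{2}\frac{j\pi}{n}$, hence $d^{2}(r)+4g(r)=4g(r)\sin^{2}\frac{j\pi}{n}$, which is nonzero because $g(r)\ne 0$ and $1\le j\le n-1$. Then, since $\gamma_j$ is a root of $F_n$ by Lemma \ref{FibonacciIrreducibleRoots}, the identity gives $\Ft{n}(r)=\bigl(\sqrt{g(r)}\bigr)^{n-1}F_n(\gamma_j)=0$, using once more that $\sqrt{g(r)}\ne 0$.

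The one point that needs genuine care — and the only real obstacle — is the multivaluedness of $\sqrt{g(x)}$: the hypothesis $d(r)/\sqrt{g(r)}=\gamma_j$ tacitly fixes a branch of the square root at $r$, and one must verify the conclusion is independent of that choice. This is immediate: switching the branch of $\sqrt{g(r)}$ replaces $\gamma_j$ by $-\gamma_j=\gamma_{n-j}$, which is again a member of $\{\gamma_1,\dots,\gamma_{n-1}\}$ and hence again a root of $F_n$ (equivalently, $F_n(-y)=(-1)^{n+1}F_n(y)$, a special case of Lemma \ref{EvenOddFunctionTest}). So in either branch $F_n\bigl(d(r)/\sqrt{g(r)}\bigr)=0$, and $\Ft{n}(r)=0$ regardless. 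I would close by remarking that the same argument, applied to the Lucas-type Binet formula and Lemma \ref{LucasIrreducibleRoots}, is exactly what yields the companion Theorem \ref{RootsGFLucasType}.
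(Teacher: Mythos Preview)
Your proposal is correct and follows essentially the same route as the paper: both establish $\Ft{n}(r)=\bigl(\sqrt{g(r)}\bigr)^{n-1}F_n\bigl(d(r)/\sqrt{g(r)}\bigr)$ and then invoke Lemma~\ref{FibonacciIrreducibleRoots}, with the paper carrying this out via the binomial expansion of Lemma~\ref{genHogattLemma} (your second route) rather than via Binet. Your added checks---the nondegeneracy $d^{2}(r)+4g(r)\ne 0$ and the branch independence of $\sqrt{g(r)}$---are sound refinements that the paper's terse argument does not spell out.
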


\begin{proof} Evaluating the expression $\Ft{n}(x)$, as given in Lemma \ref{genHogattLemma}, at $x=r$ we have
$$\Ft{n}(r)= \sum _{i=0}^{\lfloor \frac{n-1}{2}\rfloor } \binom{n-i-1}{i} d(r)^{n-2 i-1}g(r)^i.$$
Since $g(r)\ne 0$, we have 

$$\frac{\Ft{n}(r)}{\big(\sqrt{g(r)}\big)^{n-1}}= \sum _{i=0}^{\lfloor \frac{n-1}{2}\rfloor } \binom{n-i-1}{i} d(r)^{n-2 i-1} \frac{g(r)^i}{\big(\sqrt{g(r)})^{n-1}}.$$
After some simplifications we have
$$\Ft{n}(r)/\big(\sqrt{g(r)}\big)^{n-1}= \sum _{i=0}^{\lfloor \frac{n-1}{2}\rfloor } \binom{n-i-1}{i} \Bigg(\frac{d(r)}{\sqrt{g(r)}}\Bigg)^{n-2 i-1}=F(\gamma_j)=0.$$
This completes the proof.
\end{proof}

The proof of the following theorem is similar to that of Theorem \ref{RootsGFFiboType}; therefore, we omit it.

\begin{theorem} \label{RootsGFLucasType} Let  $\Lt{n}(x)$ be a GFP as given in  \eqref{Fibonacci;general:LT}. If  
 $t \in \mathbb{C}$ satisfy that $d(t)/\sqrt{g(t)} =\tau_j$, 
 for some  $j=1,2, \dots, n-1$, where $\tau_j= 2i \cos \frac{(2j+1) \pi}{2n}$ and $g(t) \ne 0$, then $t$ is a roots of $\Lt{n}(x)$. 
\end{theorem}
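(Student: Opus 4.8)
The plan is to mirror the proof of Theorem~\ref{RootsGFFiboType}, replacing the binomial identity for Fibonacci polynomials with the one for Lucas polynomials from Lemma~\ref{genHogattLucasLemma}. First I would evaluate $\Lt{n}(x)$ at $x=t$ using Lemma~\ref{genHogattLucasLemma}, obtaining
\[
\Lt{n}(t)=\frac{1}{\alpha} \sum_{i=0}^{\lfloor n/2\rfloor} \frac{n}{n-i}\binom{n-i}{i} d(t)^{n-2i} g(t)^i.
\]
Since $g(t)\ne 0$, I would divide both sides by $\big(\sqrt{g(t)}\big)^{n}$ and redistribute the powers of $g(t)$ inside the sum, so that each term carries the factor $\big(d(t)/\sqrt{g(t)}\big)^{n-2i}$. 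This produces
\[
\frac{\alpha\,\Lt{n}(t)}{\big(\sqrt{g(t)}\big)^{n}}=\sum_{i=0}^{\lfloor n/2\rfloor} \frac{n}{n-i}\binom{n-i}{i}\left(\frac{d(t)}{\sqrt{g(t)}}\right)^{n-2i},
\]
and the right-hand side is, by Lemma~\ref{genHogattLucasLemma} applied to the \emph{classical} Lucas polynomial (where $d(x)=x$, $g(x)=1$, $\alpha=1$), exactly $L_n\!\big(d(t)/\sqrt{g(t)}\big)$.

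The next step is to invoke the hypothesis $d(t)/\sqrt{g(t)}=\tau_j$ together with Lemma~\ref{LucasIrreducibleRoots}, which tells us $L_n(\tau_j)=0$ for $j=0,1,\dots,n-1$. Hence the displayed sum vanishes, and since $\alpha\ne 0$ and $\big(\sqrt{g(t)}\big)^{n}\ne 0$, we conclude $\Lt{n}(t)=0$, i.e.\ $t$ is a root of $\Lt{n}(x)$, as claimed.

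The only genuine subtlety—what I would flag as the main obstacle—is the branch-of-square-root bookkeeping: the identity $g(t)^i/\big(\sqrt{g(t)}\big)^{n}=\big(\sqrt{g(t)}\big)^{-(n-2i)}$ is being used, and since $n-2i$ can be odd this implicitly requires a consistent choice of $\sqrt{g(t)}$ throughout, matching the branch fixed when one writes $\tau_j=2i\cos\frac{(2j+1)\pi}{2n}$. As in the Fibonacci-type case this is harmless because the final conclusion $\Lt{n}(t)=0$ is independent of the branch chosen (the whole sum scales by a nonzero constant either way), but it is the step that deserves a remark. Everything else—the combinatorial rewriting and the appeal to Lemmas~\ref{genHogattLucasLemma} and~\ref{LucasIrreducibleRoots}—is routine, which is precisely why the paper states that the proof is analogous to that of Theorem~\ref{RootsGFFiboType} and omits it.
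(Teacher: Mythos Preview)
Your proposal is correct and follows exactly the approach the paper intends: it is the direct analogue of the proof of Theorem~\ref{RootsGFFiboType}, with Lemma~\ref{genHogattLucasLemma} replacing Lemma~\ref{genHogattLemma} and Lemma~\ref{LucasIrreducibleRoots} replacing Lemma~\ref{FibonacciIrreducibleRoots}, which is precisely why the paper omits the proof as ``similar.'' Your remark on the square-root branch is a fair caveat but, as you note, does not affect the conclusion.
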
 

\begin{corollary} \label{EvenWEight} Let $\Ft{n}(x)$ be a GFP such that $d(x)=ax+b$ and $g(x)$ is a positive constant. Then there is no measure $\mu(x)$ such that $\Ft{n}(x)$ is orthogonal. Moreover, if $g(x)$ is a negative constant, there exists a measure $\mu(x)$ that makes $\Ft{n}(x)$ orthogonal. 
\end{corollary}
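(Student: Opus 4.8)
The plan is to handle the two assertions separately: the non-orthogonality via Theorem~\ref{RootsGFFiboType} together with the classical fact that orthogonal polynomials have real zeros, and the orthogonality via the converse part of Favard's Theorem (Theorem~\ref{FavardThm}).

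For the first assertion, assume $d(x)=ax+b$ (with $a\neq 0$) and $g(x)\equiv c$ with $c>0$. First I would record, using Lemma~\ref{genHogattLemma}, that the top-degree term of $\Ft{n}(x)$ is $d(x)^{n-1}$, so $\deg\Ft{n}=n-1$ and the sequence $\{\Ft{n}\}_{n\ge 1}$ is genuinely indexed by degree; in particular orthogonality of the family would require each $\Ft{n}$ to have only real zeros. Now I apply Theorem~\ref{RootsGFFiboType} with $\gamma_j=2i\cos(j\pi/n)$: since $g\equiv c>0$ we have $\sqrt{g(r)}=\sqrt{c}\in\mathbb{R}_{>0}$, so a root is obtained by solving $ar+b=2i\sqrt{c}\cos(j\pi/n)$, giving $r_j=\bigl(2i\sqrt{c}\cos(j\pi/n)-b\bigr)/a$ for $j=1,\dots,n-1$ (these are all of the $n-1$ roots, since $d$ is injective and the $\gamma_j$ are distinct). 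For $n\ge 3$ and $j=1$ one has $\cos(\pi/n)\neq 0$, hence $\operatorname{Im}(r_1)=2\sqrt{c}\cos(\pi/n)/a\neq 0$, so $r_1\notin\mathbb{R}$; equivalently, the degree-two member $\Ft{3}(x)=(ax+b)^2+c$ already has no real zero when $c>0$. Therefore no positive measure can make the family orthogonal.

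For the second assertion, assume $g(x)\equiv c$ with $c<0$. I would reindex by degree, setting $P_m(x):=\Ft{m+1}(x)$, so $P_0=1$, $P_1=ax+b$, and $P_m=(ax+b)P_{m-1}+c\,P_{m-2}$ for $m\ge 2$. Rewriting this recurrence as $\tfrac1a P_{m+1}=(x+\tfrac ba)P_m+\tfrac ca P_{m-1}$ exhibits it in the form of Theorem~\ref{FavardThm} with the constant data $a_m=1/a$, $b_m=-b/a$, $d_m=-c/a$, and the initial condition $xP_0=a_0P_1+b_0P_0$ is satisfied with $a_0=1/a$, $b_0=-b/a$. Then $a_md_{m+1}=-c/a^2>0$ because $c<0$, so the converse direction of Theorem~\ref{FavardThm} produces a positive measure $\mu$ on $\mathbb{R}$ with respect to which $\{P_m\}$—and hence $\{\Ft{n}\}$—is orthogonal. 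One can even identify $\mu$ explicitly: after the affine substitution pulling $d(x)$ back to $2x$ these are rescaled Chebyshev polynomials of the second kind, as in Proposition~\ref{GeneralChebCase}, so $\mu$ is a rescaling of $\sqrt{1-x^2}\,dx$.

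Most of this is routine bookkeeping; the one place that needs a careful word is the first step, where I rely on the classical necessary condition that orthogonality (with respect to a positive measure, as in Section~2) forces real zeros, and where I must check that Theorem~\ref{RootsGFFiboType} genuinely yields a root for the chosen $\gamma_j$—which it does, since $d(r)=\sqrt{c}\,\gamma_j$ is solvable because $d$ is a nonconstant linear polynomial and $g(r)=c\neq 0$ as the hypothesis of that theorem requires. I expect this to be the main (though modest) obstacle; the Favard computation in the second part is purely mechanical.
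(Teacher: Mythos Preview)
Your argument is correct and rests on the same key input as the paper's proof: Theorem~\ref{RootsGFFiboType} to exhibit non-real zeros when $g>0$, and Proposition~\ref{GeneralChebCase} for the explicit measure when $g<0$. The one substantive difference is in how you finish the first assertion. You appeal to the classical fact that an orthogonal polynomial sequence with respect to a positive measure has only real zeros; the paper instead argues directly that for any two odd primes $p\ne q$, all $p-1$ (respectively $q-1$) roots of $\Ft{p}$ and $\Ft{q}$ are non-real, so both polynomials---of even degree with positive leading coefficient $a^{p-1},a^{q-1}$---are strictly positive on $\mathbb{R}$, whence $\int\Ft{p}\Ft{q}\,d\mu>0$ for any nonzero positive measure $\mu$. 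Your route is shorter if one is willing to quote the real-zeros theorem as known; the paper's route stays self-contained within the results actually stated in the paper. For the ``moreover'' clause the paper simply invokes Proposition~\ref{GeneralChebCase}; your Favard computation is correct but unnecessary, since the explicit weight is already available there (as you yourself note at the end).
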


\begin{proof} For the first part of the proof, it suffices to consider the case where $n$ is equal to a prime number $p$. From Theorem \ref{RootsGFFiboType}, we know that all roots of $\Ft{p}(x)$ are complex numbers. Thus, $\Ft{p}(x)$ is a polynomial with positive valuation. Since any measure $\mu(x)$ is a function with positive values, it follows that for any two prime numbers, $p$ and $q$, the inner product $\langle \Ft{p}(x),\Ft{q}(x)\rangle \ne 0$.

The proof of the ``moreover" part follows straightforwardly from the Proposition \ref{GeneralChebCase}.
\end{proof}

\section{Random Walks determined by a class of generalized Fibonacci polynomials}\label{sectionrw}

In this section, we briefly review the definitions of random walks and birth-and-death processes, as these topics are well established. For further details, the interested reader is referred to Dominguez \cite{Dominguez} and references therein. These stochastic processes constitute a special class of Markov processes with a discrete state space. We derive the one-step transition matrices for these two types of Markov processes in discrete time. With these matrices at hand, we aim to compute their corresponding $n$-step transition probabilities, which are given by the entries of the $n$--th power of the one--step transition matrix. A successful method for this computation is the Karlin-McGregor representation; see Karlin and McGregor \cite{Karlin2} for a discussion on the relationship between orthogonal polynomials and random walks. Our interest in studying orthogonal polynomials lies in understanding which random walks are induced by such polynomials (see, for instance, \cite{Coolen}). In this work, we are particularly interested in determining which generalized Fibonacci polynomials induce or characterize a random walk, in the sense we will specify later.

In the previous sections, we characterized when a GFP is orthogonal. Not all orthogonal GFPs, however, give rise to random walks. Here we determine the precise conditions under which they do. In particular, GFPs of Lucas type are well adapted for this purpose, as their two nonzero initial terms satisfy the hypotheses of the random walk theorem applied here. By contrast, GFPs Fibonacci type have a first initial term equal to zero, which makes them incompatible with this theorem. 

\subsection{Random walks} 

Random walks are one of the most studied stochastic processes. In its simplest form it is just a sum of  independent and identically distributed random variables. Its simplicity and wide applicability made random walks so popular. In this work we are particularly interested in both, discrete-and-continuous time, nearest neighbor random walks. 

\subsubsection{Discrete-time Markov chain}

A discrete--time random walk $(X_n)_{n\in \mathbf{Z}^+}$ is a discrete--time Markov chain with state space $\mathcal{S}$, the set of nonnegative integers. More generally, any countable set could serve as the state space. Its one-step transition matrix is defined as follows.

\[P_{ij}=\mathbf{P}(X_{n+1}=j/X_n=i)=
\begin{cases} 
p_n &\text{if } j=i+1 \\
r_n &\text{if } j=i\\
q_n &\text{if } j=i-1 \\
0 &\text{if } |i-j |>1.
\end{cases}
\]
Therefore, these are the entries of a semi--infinite transition matrix: 

\begin{equation}\label{transmatrix}
 P=(P_{ij})=\begin{pmatrix}
r_0 & p_0& 0& 0 & 0 & \dots\\
q_1& r_1 & p_1 & 0 & &\dots\\
0 & q_2 & r_2& p _2 & 0 & \dots \\
\vdots &\vdots &\ddots & \ddots & \ddots & \dots
\end{pmatrix}.
\end{equation}

The entries of the transition matrix satisfy
\begin{equation}\label{Estocondi}
r_{i} \geq 0, \quad p_{i} > 0, \quad q_{ i} > 0 \quad \text{and} \quad p _{i }+r_ { i}+q_ {i}=1 \ \text{for} \ i\geq 0. 
\end{equation}
However, we allow $q_0= 1-p_0-r_0\geq 0$. If $q_0=0$, then the matrix $P$ is called a stochastic matrix, and the random walk is said to have a reflecting barrier at $0$. If $q_0 > 0$, then the random walk has an (ignored) absorbing state at $-1$, which can only be reached through state $0$. In this case, we refer to $P$ as a strictly substochastic matrix.

Let $T_j = \min\{n \geq 1: X_n = j\} $ be the first time the chain visits state $j$. Any state $i \in\mathcal{S}$ is recurrent if $\mathbf{P}(T_i < \infty /X_0 = i) = 1$,
any state $i \in \mathcal{S}$ is transient if it is not recurrent, i.e., $\mathbf{P}(T_i < \infty /X_0 = i)  < 1$.
The expected return time from state $i$ to state $j$ is given by $\tau_{ij} = \mathbb{E}(T_j|X_0 = i)$. 

It is clear that if a state $i \in\mathcal{S}$ is transient, then $\tau_{ii} =\infty$. A state $i \in\mathcal{S}$ is positive recurrent (or ergodic) if it is recurrent and $\tau_{ii} <\infty$. 

\subsubsection{Continuous--time Markov chain}
In this section we describe the probabilistic  aspects of a special type of continuous--time Markov chain  in terms of the spectral representation of the corresponding infinitesimal operator of the process. In this case, this operator is a tridiagonal matrix $\mathcal{A}$ with nonpositive diagonal entries, positive off-diagonal entries (called the birth-and-death rates) and the sum of each row is less than or equal to 0.

In this work we focus only on  continuous--time Markov chains defined on a discrete state space $\mathcal{S}$. Since time is continuous while the state space is discrete, the chain moves in jumps. That is, the process remains in a given state for a random amount of time before transitioning to another state.

The Markov property ensures that the waiting time in each state follows an exponential distribution. Additionally, after waiting for an exponentially distributed time, the choice of the next state depends only on the current state. These transitions are governed by a stochastic matrix, which determines the probabilities of moving between states.

Consider a continuous-time Markov chain $(X_t)_{t>0}$, with state
space $\mathcal{S} \subset \mathbb{Z}$. In this paper we take $\mathcal{S} = \{0,1,2, . . .\}$. Let
\[
P_{ij}(t)=\mathbf{P}(X_{t}=j/X_0=i).
\]
We claim that $P(t)=(P_{ij}(t))$ obeys the  Kolmogorov (backward and forward) differential equations which can be written conveniently in its matrix form:
\begin{equation}\label{Kolmogoroveq}
    P'(t) = \mathcal{A}P(t) \; \text{with} \; P(0) = I 
\end{equation}
and

\begin{equation}\label{Kolmogoroveq2}
    P'(t) = P(t)\mathcal{A} \; \text{with} \; P(0) = I,
\end{equation}
respectively.

We define $\mathcal{A} = (a_{ij})=P'(0)$ as the \emph{matrix representation of the infinitesimal} generator of the process or it is called the \emph{transition rate matrix} of the Markov chain. Here,
\[ a_{ij}=
\begin{cases} 
q_{ij} &\text{if } i\neq j \\
-q_{i} &\text{if } j=i,\\
\end{cases}
\]
where $q_i=\sum_{j\in \mathcal{S}, j\neq i}q_{ij}$ with $q_i,q_{ij} \geq 0$ while $a_{ii} = -q_i\leq 0$. Thus, $\sum_{j\in \mathcal{S}}a_{ij}= 0$ for any $i \in \mathcal{S}$.

A state $i \in S$ is called \emph{stable} if $q_i < \infty$ and it is called instantaneous if $q_i = \infty$. The process is stable if all its states are stable; i.e. if all the diagonal entries are finite. 

A state $i$ is called an \emph{absorbing} state if $q_i = 0$ in which case $P_{ii}(t) = 1$ for all $t \geq  0$. If in addition all its rows sums up to 0,
i.e. $\sum_{j\in \mathcal{S}}a_{ij}=0$ for all $i \in \mathcal{S}$,  $\mathcal{A}$ is said to be \emph{conservative}.

A set of birth-and-death rates is a sequence $(\lambda_n, \mu_n), \ n\geq 0$ such that $\lambda_n>0$ for $\ n\geq 0$ and $ \mu_n\geq 0$ for $n\geq 0$. The continuous--time Markov chains that we study here are continuous--time birth-and-death processes.

The infinitesimal generator $\mathcal{A}$ in \eqref{Kolmogoroveq} and \eqref{Kolmogoroveq2} is
\begin{equation}\label{transmatrixcont}
 \mathcal{A}=\begin{pmatrix}
-(\lambda_{0}+\mu_0) &\lambda_{0}& 0& 0 & 0 & \dots\\
\mu_1& -(\lambda_{1}+\mu_1) & \lambda_1 & 0 & &\dots\\
0 & \mu_2 & -(\lambda_{2}+\mu_2)& \lambda_2 & 0 & \dots \\
\vdots &\vdots &\ddots & \ddots & \ddots & \dots
\end{pmatrix}.
\end{equation}
The processes is conservative if and only if $\mu_0= 0$. When $\mu_0 > 0$, state $0$ can transition to an absorbing state, $-1$ with probability $\mu_0 /(\lambda_0 + \mu_0)$.

\subsection{The Karlin-McGregor representation}

The Karlin-McGregor formula gives a spectral representation of the transition probabilities for birth-and-death processes. The formula for these probabilities involves orthogonal polynomials and a spectral measure.

\subsubsection{Karlin-McGregor representation for discret--time} 
The $n$-step transition probabilities of the random walk $X$ are denoted by $P_{ij}(n)=\mathbf{P}(X_{m+n}=j/X_m=i)$. Note that $P_{ij}(1)=P_{ij}$. Defining $P(n)=(P_{ij}(n))$, where  $i,j\in \mathbf{Z}^+$ as the $n$-step transition matrix, we have $P(n)=P(1)^n=P^n$. In general, this calculation is not straightforward. In the case of a birth-and-death process $(X_n)_{n \geq 0}$, with transition matrix as given in \eqref{transmatrix}, Karlin and McGregor \cite{Karlin2} showed that the $n$-step transition probabilities may be represented as: 
\[
P_{ij}(n)=\pi_j\int_{-1}^{1} x^n Q_i(x)Q_j(x)\omega(x) dx.  
\]
Here, $(Q_j(x))$ is a sequence of polynomials defined by the recurrence relation 
\begin{equation} \label{recurrence}
xQ_j( x)=q_jQ_{j-1}(x)+r_j Q_j (x)+p_jQ_{j+1}(x),
\end{equation}
with $Q_{0}(x)=1$ and $p_0Q_1(x)=x-r_0$. By Theorem \ref{FavardThm}, all monic orthogonal polynomials family satisfy the recurrence  relation \eqref{recurrence}.
An alternative representation of this recurrence relation is in matrix form:   
$$
xQ(x) = {P}Q(x),
$$
where $P$ is the tridiagonal matrix given in \eqref{transmatrix} and  $Q(x) = (Q_0(x),Q_1(x),\dots)^T$ is a column vector of orthogonal polynomials.

The behavior of a birth-and-death process is characterized by the behavior of the so-called potential coefficients defined by these relations 
\begin{equation}\label{pi}
\pi_0=1 \; \text{and} \; \pi_j=\frac{p_0p_1\dots p_{j-1}}{q_1q_2\dots q_{j}} \; \text{for} \; j\geq 1.
\end{equation}
Of course, these relations are obtained from the reversibility condition for $P$ which reads as follows
$$
\pi_iP_{ij}=\pi_jP_{ji}.
$$
We observe that if the transition probability matrix $P$ is stochastic, then $\pi$ is an invariant measure (i.e. $\pi$ is a solution of $\pi P = \pi$) for the stochastic process if and only if $\sum _{i=0}^\infty \pi _i< \infty$.

\subsubsection{Karlin-McGregor representation for continuous--time}
This section is mostly based on the book by Dom\'inguez (\cite{Dominguez}, chap. 3), This formula is extremely useful and it is used to compute the transition probability $P_{ij}(t)$ in terms of orthogonal polynomials and a probability measure  with support contained in the interval $[0,\infty)$.
\[
P_{ij}(t)=\pi_j\int_{0}^{\infty} e^{-xt}  Q_i(x)Q_j(x)\omega(x) dx.
\]
The behavior of a birth-and-death chain is characterized by the behavior of the so-called potential coefficients, defined by $$\pi_0=1, \quad \pi_j=\frac{\lambda_0\lambda_1\dots\lambda_{j-1}}{\mu_1\mu_2\dots\mu_{j}},$$ with $j\geq 1$
and $\pi P = 0$. Therefore, it is an invariant vector of the birth-and-death chain, which will be a distribution if $\sum _{i=0}^\infty \pi _i< \infty$, and $(Q_j(x))$ is a sequence of polynomials defined by the recurrence relation
\begin{equation} \label{recurrence2}
-xQ_j( x)=\mu_jQ_{j-1}(x)+\beta_j Q_j (x)+\lambda_jQ_{j+1}(x), 
\end{equation}
with $j \geq 0$, $\beta_{j}=-(\mu_{j}+\lambda_{j})$, $Q_{0}(x)=1$, and $\lambda_0Q_1(x)=x-\beta_0$.
An alternative form of writing this recurrence relation is using the matrix form  $-xQ(x) = \mathcal{A}Q(x)$, where $\mathcal{A}$ is the tridiagonal matrix given in \eqref{transmatrixcont} and  $Q(x) = (Q_0(x),Q_1(x),\dots)^T$ the column vector of orthogonal polynomials.
\subsection{Random walk and polynomial sequences}
A polynomial sequence $(P_n(x))$ that is orthogonal with respect to a measure on $[-1,1]$ and for which the parameters
$\alpha_n$ in the recurrence relation 

\begin{equation}\label{random}
  P_{n+1}(x)=(x-\alpha_n)P_n(x)-\beta_nP_{n-1}(x), \ n\geq 1, 
P_0(x)=1, \ P_1(x)=x-\alpha _0,
\end{equation}
are nonnegative is called a \emph{random walk polynomial sequence}. Any measure with respect to which a random walk polynomial sequence is orthogonal is called a \emph{random walk measure}.

Since the generalized Fibonacci polynomials given by Corollary \ref{CoroChebichevTypeFuction} are orthogonal, we will determine the conditions under which they qualify as random walk polynomials. To this end, we use the following theorem.

\begin{theorem}[\cite{Coolen}] \label{TeoSeqRandom} The following statements are equivalent:
\begin{enumerate}[(i)]
    \item The sequence $(P_n(x))$ is a random-walk polynomial sequence (see \ref{random}).
    \item There are numbers $p_n>0$, $q_{n+1}>0$, and  $r_n \geq 0$ for $n\geq 0$ satisfying $p_0+r_0\leq 1$ and $p_n+q_n+r_n=1$ for $n\geq 1$  (see \ref{transmatrix}), such that $\alpha_n=r_n$ and $\beta_{n+1}=p_nq_{n+1}$ for $n\geq 0$.
    \item The sequence $(P_n(x))$ is orthogonal with respect to a measure with support in $[-1,1]$  and satisfies  $\alpha_n\geq 0$ for $n\geq 0$.
\end{enumerate}
  \end{theorem}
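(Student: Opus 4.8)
The plan is to pass everything through the (bounded, symmetric) Jacobi matrix $J$ attached to the monic recurrence \eqref{random}, namely the tridiagonal operator with diagonal entries $\alpha_n$ and off‑diagonal entries $\sqrt{\beta_{n+1}}$, and to rephrase each of (i)--(iii) as a spectral statement about $J$. First I would observe that (i) and (iii) are the same statement: by the definition given just above, a random‑walk polynomial sequence is exactly one that is orthogonal with respect to a measure supported in $[-1,1]$ and has $\alpha_n\ge 0$. So the real content is the equivalence of (ii) and (iii). Since the orthogonality measure in (iii) has compact support, the Hamburger moment problem is determinate; hence that measure is unique and equals the spectral measure of $J$ at $e_0$, and (iii) becomes: $-I\preceq J\preceq I$ (equivalently $I-J\succeq 0$ and $I+J\succeq 0$) together with $\alpha_n\ge 0$. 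This is the bridge I want between the two remaining conditions; the tridiagonal matrix $P$ of \eqref{transmatrix} enters because, after the standard diagonal symmetrization with the potential coefficients, the recurrence \eqref{random} with $\alpha_n=r_n$, $\beta_{n+1}=p_nq_{n+1}$ is precisely the one generated by $J$.

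Next I would run an $\mathrm{LDL}^\top$ (Cholesky‑type) factorization of the tridiagonal operators $I\mp J$. Writing $I-J$ with diagonal $1-\alpha_n$ and off‑diagonal $-\sqrt{\beta_{n+1}}$, set $\delta_0:=1-\alpha_0$ and $\delta_n:=(1-\alpha_n)-\beta_n/\delta_{n-1}$; the standard fact (proved on finitely supported vectors, using $\beta_n>0$ to rule out any breakdown) is that $I-J\succeq 0$ iff $\delta_n>0$ for all $n$, and then moreover $\delta_0=1-\alpha_0>0$ strictly. For (iii)$\Rightarrow$(ii) one simply takes $r_n:=\alpha_n$, $p_n:=\delta_n$, $q_n:=\beta_n/\delta_{n-1}$ for $n\ge 1$, and $q_0:=1-p_0-r_0$: then $p_n>0$, $q_{n+1}>0$, $r_n\ge 0$, the defining identity for $\delta_n$ gives $p_n+q_n+r_n=1$ for $n\ge 1$, one has $\beta_{n+1}=\delta_n\cdot(\beta_{n+1}/\delta_n)=p_nq_{n+1}$, and $p_0+r_0=(1-\alpha_0)+\alpha_0=1$ so $q_0=0\ge 0$ — exactly (ii). For (ii)$\Rightarrow$(iii) I would run the same $\delta$‑recursion and show inductively that $\delta_n\ge p_n>0$ (from $\delta_0=1-\alpha_0\ge p_0$ and $\beta_n/\delta_{n-1}\le\beta_n/p_{n-1}=q_n$), giving $I-J\succeq 0$; then, with the analogous recursion $\epsilon_0:=1+\alpha_0$, $\epsilon_n:=(1+\alpha_n)-\beta_n/\epsilon_{n-1}$ for $I+J$, prove inductively $\epsilon_n\ge\delta_n+2\alpha_n\ge\delta_n>0$ (the step uses only $\alpha_n\ge 0$ and $\beta_n>0$), giving $I+J\succeq 0$. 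Combined with $r_n=\alpha_n\ge 0$ this is $-I\preceq J\preceq I$ and $\alpha_n\ge 0$, i.e.\ (iii).

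The main obstacle is not the algebra of these recursions but the operator‑theoretic bookkeeping: justifying that positive semidefiniteness of the semi‑infinite tridiagonal operators $I\mp J$ is captured exactly by positivity of the continued‑fraction quantities $\delta_n,\epsilon_n$ (reduce to finite truncations, note the truncated principal minors satisfy the same three‑term recursion, and use $\beta_n\ne 0$ to exclude a vanishing $\delta_n$), and identifying the orthogonality measure with the spectral measure of $J$, which needs determinacy and hence the a priori compactness of the support supplied by (ii) or (iii). A point worth flagging, which actually streamlines the argument, is that under $\alpha_n\ge 0$ and $\beta_n>0$ the bound $J\preceq I$ already forces $J\succeq -I$ (this is exactly the inductive inequality $\epsilon_n\ge\delta_n+2\alpha_n$), so for these sequences ``support in $(-\infty,1]$'' and ``support in $[-1,1]$'' coincide and only one chain of inequalities does the work.
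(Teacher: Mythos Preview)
The paper does not prove this theorem; it is quoted from \cite{Coolen} and used as a black box, so there is no proof in the paper against which to compare your proposal.

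That said, your outline is a correct and standard route to the result. The equivalence (i)$\Leftrightarrow$(iii) is indeed immediate from the definition of a random-walk polynomial sequence given just above the theorem. For (ii)$\Leftrightarrow$(iii), your Cholesky/continued-fraction pivots $\delta_n,\epsilon_n$ for $I\mp J$ do the job: the inductions $\delta_n\ge p_n$ (from $\delta_0=1-r_0\ge p_0$ and $\beta_n/\delta_{n-1}\le\beta_n/p_{n-1}=q_n$) and $\epsilon_n\ge\delta_n+2\alpha_n$ are correct as written, and your observation that $\beta_n>0$ rules out a vanishing pivot (since $D_{n}=0$ with $D_{n-1}>0$ forces $D_{n+1}=-\beta_{n+1}D_{n-1}<0$) handles the one delicate point in (iii)$\Rightarrow$(ii). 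The only place to be slightly more explicit is the passage between ``all finite truncations of $I\mp J$ are positive semidefinite'' and ``the orthogonality measure is supported in $[-1,1]$'': you need $J$ to be essentially self-adjoint so that the spectral measure at $e_0$ really is the (unique) orthogonality measure. In direction (ii)$\Rightarrow$(iii) this is automatic because the entries of $J$ are bounded (indeed $|\alpha_n|\le 1$ and $\beta_{n+1}=p_nq_{n+1}\le 1$), so $J$ is a bounded self-adjoint operator on $\ell^2$; in direction (iii)$\Rightarrow$(ii) the compact support hypothesis gives determinacy directly. With that made explicit, your sketch is a complete proof.
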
  

 Analogously, in the case of continuous--time random walks, the Favard Theorem Theorem\eqref{FavardThm}  guarantee that there exists at least one probability measure $\omega$ supported on the interval $[0,\infty)$ such that the polynomials
defined by \eqref{recurrence2} are orthogonal with respect to $\omega$.

We can write relation \eqref{recurrence} in a matrix form then follows a tridiagonal matrix given in \eqref{transmatrix} known as Jacobi matrix. In the particular case where these polynomials are orthonormal, ${P}$ is a tri--diagonal symmetric matrix.
\subsection{Examples} 
We investigate sufficient conditions on the set of parameters in Corollary \ref{CoroChebichevTypeFuction} under which the induced family of polynomials is not only of Lucas type but also belongs to well-known families of orthogonal polynomials, such as the Chebyshev polynomials of the first kind and the Morgan-Voyce polynomials. Now, we focus on those families of polynomials that induce a random walk.

\begin{enumerate}
\item Let $p_0\in\{\pm 1, \pm 2$\} , $h=2-c$, $d(x)=cx+h=c(x-1)+2$, and $g(x)=-1$, with $c\geq 2$ and $h\leq 0$. Then, we have a GFP of Lucas type: 
\begin{eqnarray}\label{MarkovPoly1}
& &\Lt{0}(x)=p_{0}, \quad \Lt{1}(x)= \frac{p_0}{2}c\big((x-1)+2\big),  \quad \text{and}\nonumber \\  
& &\Lt{n}(x)= d(x) \Lt{n - 1}(x) - \Lt{n - 2}(x). 
\end{eqnarray}
The following are some examples of polynomials from Table \ref{familiarfibonacci} that satisfy the conditions given here. 

\begin{itemize}
\item By choosing appropriate values in \eqref{MarkovPoly1}, the polynomial $\Lt{n}(x)$ gives rise to the Chebyshev polynomial of the first kind, which, as shown in \cite{Dominguez}, has the property of a random walk. That is, if $p_0=1$ and $c=2$, then $h=0$ and $d(x)=2x$:
\begin{eqnarray*}\label{ChebyPoly2}
    & &\Lt{0}(x)=1, \quad \Lt{1}(x)= \frac{1}{2}(2x)=x,  \quad \text{and} \nonumber\\
    & &\Lt{n}(x)= 2x \Lt{n - 1}(x) -\Lt{n - 2}(x). 
\end{eqnarray*}

\item If $p_0=-2$ and $c$ is any positive integer, in particular $c=16$, then $h=-14$ and $d(x)=16x-14$, we have 
\begin{eqnarray*}\label{StochPoly3}
    & &\Lt{0}(x)=-2, \quad \Lt{1}(x)= -[16(x-1)+2],  \quad \text{and}\\
    & & \Lt{n}(x)=(16x-14)\Lt{n - 1}(x) -\Lt{n - 2}(x),\nonumber
\end{eqnarray*}
is a random walk with stochastic matrix
\begin{equation*} 
P =\begin{pmatrix}
\frac{14}{ 16} & \frac{2}{16}&0& 0 & 0 & 0 & \dots\\
\vspace{0,2cm}
\frac {1} {16}&\frac{14}{ 16}& \frac{1}{16}& & 0 &0 &\dots\\
\vspace{0,2cm}
0 &\frac {1} {16}&\frac{14}{ 16}& \frac{1}{16}& 0& 0 &\dots \\
\vspace{0,2cm}
0& 0 &\frac {1} {16}&\frac{14}{ 16}& \frac{1}{16}& 0& \dots \\
\vspace{0,2cm}
\vdots &\vdots & \vdots &\ddots & \ddots & \ddots & \dots
\end{pmatrix}.
\end{equation*}

\item If  $p_0=2$, $h=0$ and $c=3$, the Fermat--Lucas polynomial
\begin{eqnarray*}\label{SubstPoly3}
    & &\Lt{0}(x)=2, \quad\Lt{1}(x)= 3x,  \quad \text{and}\\
    & & S_{n}(x)=3x S_{n - 1}(x) - 2S_{n - 2}(x),\nonumber
\end{eqnarray*}
induces a substochastic matrix.

\end{itemize}

\item Let $p_0\in\{\pm 1, \pm 2$\} , $h=2$, $d(x)=cx+h=cx+2$, and $g(x)=-1$ with $c< 0$. Then, the Lucas polynomial is
\begin{eqnarray}\label{SubstPoly4}
    & &\Lt{0}(x)=p_0, \quad \Lt{1}(x)=\frac{p_0}{2}\big(cx+2\big),  \quad \text{and}\\
    & & \Lt{n}(x)= d(x) \Lt{n - 1}(x) -\Lt{n - 2}(x).\nonumber
\end{eqnarray}

The two polynomials below from Table \ref{familiarfibonacci}  satisfy the conditions stated in \eqref{SubstPoly4}.
\begin{itemize}
\item If $p_0=2$ and $c=-1$, then $d(x)=-x+2$, then
\begin{eqnarray*}\label{SubstPoly5}
    & &\Lt{0}(x)=2, \quad \Lt{1}(x)= -x+2,  \quad \text{and}\\
    & & \Lt{n}(x)= (-x+2) \Lt{n - 1}(x) -\Lt{n - 2}(x). \nonumber
\end{eqnarray*}

\item If $p_0=-2$ and $c=-4$, then $h=2$ and $d(x)=-4x+2$. We have 
\begin{eqnarray*}\label{MorPoly6}
    & &\Lt{0}(x)=-2, \quad \Lt{1}(x)=-\big(-4x+2\big),  \quad \text{and}\\
    & & \Lt{n}(x)= (-4x+2) \Lt{n - 1}(x) -\Lt{n - 2}(x).\nonumber
\end{eqnarray*}
\end{itemize}

The two polynomials below from Table \ref{familiarfibonacci} do not satisfy the conditions stated  \eqref{SubstPoly4}.

\begin{itemize}
\item If $p_0=2$, then we have the Morgan-Voyce polynomials. Thus, 
\begin{eqnarray*}\label{MorganPoly7}
    & &\Lt{0}(x)=2, \quad \Lt{1}(x)=x+2,  \quad \text{and}\\
    & & \Lt{n}(x)= (x+2) \Lt{n - 1}(x) -\Lt{n - 2}(x).\nonumber
\end{eqnarray*}
Clearly, it does not satisfy the condition that $c$ must be less than zero.

\item If $p_0=2$, $h=0$ and $c=1$, then we have the Vieta Lucas polynomials. Thus, 
\begin{eqnarray*}\label{VietaPoly5}
    & &\Lt{0}(x)=2, \quad \Lt{1}(x)=x,  \quad \text{and}\\
    & & \Lt{n}(x)= x \Lt{n - 1}(x) -\Lt{n - 2}(x), \nonumber
\end{eqnarray*}

Clearly, it does not satisfy the condition that $c$ must be less than zero. 
\end{itemize}
\end{enumerate}

\subsection{Conditions on the GFP to obtain 
discrete-time and continuous-time Markov chains}

In this section, we study sufficient conditions under which orthogonal GFPs determine either a discrete-time or a continuous-time Markov chain.
 
 For the following proposition concerning the GFP of Lucas type, we require that $d(x)=cx+h$ and $g(x)=-(c-1+h)$ where $c, h \in \mathbb{Z}$, $h\leq 0$, and $c > 1-h>0$. The GFP of Lucas type, $\Lt{n}(x)$, is defined with initial conditions $\Lt{0} (x)=p_0$, $\Lt{1}( x)=p_1(x)$ as given in \eqref{Fibonacci;general:LT}. 
 
\begin{proposition}\label{RandWalksGenChebCase} 
Let $\omega(x):=\sqrt{4(c-1+h)-(cx+h)^2}$.
 If 
 $$\frac{-\sqrt{4(c-1+h)-1}-h}{c}\leq  x \leq \frac{\sqrt{4(c-1+h)}-h}{c},$$ 
 then $1/ \omega(x)\subset [-1,1]$. Moreover, $\Lt{n}( x)$  determines a random walk.
\end{proposition}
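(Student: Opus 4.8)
The plan is to derive both assertions from Corollary~\ref{CoroChebichevTypeFuction} together with the characterisation of random--walk polynomial sequences in Theorem~\ref{TeoSeqRandom}, after normalising $\Lt{n}$ to its monic form.

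First I would note that the present hypotheses are the case $t=1$ of Corollary~\ref{CoroChebichevTypeFuction}: putting $k:=4(c-1+h)$, the assumption $c>1-h$ is exactly $c-1+h>0$, so $k\in\mathbb{Z}_{>0}$, while $g(x)=-(c-1+h)=-k/4$ and $d(x)=cx+h$ have the required form (the factor $x^{t-1}$ is $1$). Hence $(\Lt{n})$ is orthogonal on $[-s_1,s_2]$, where $s_1=(\sqrt{k}+h)/c$ and $s_2=(\sqrt{k}-h)/c$, with respect to the weight $1/\omega(x)$, $\omega(x)=\sqrt{4(c-1+h)-(cx+h)^2}$, and this weight is supported on $(-s_1,s_2)$ (which contains the interval named in the statement). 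The first assertion thus reduces to the inclusion $[-s_1,s_2]\subseteq[-1,1]$. Since $c>1-h>0$ and $h\le 0$ we have $c-h>0$ and $c+h>1>0$, so squaring turns $s_2\le 1$ into $(c+h-2)^2\ge 0$ and turns $s_1\le 1$ into $c^2-2c(h+2)+(h-2)^2\ge 0$, a quadratic in $c$ whose discriminant is $32h\le 0$; both hold.

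Next I would put the recurrence into the monic normal form of \eqref{random}. As $\deg\Lt{n}=n$ with leading coefficient $p_0c^n/2$ for $n\ge 1$, set $\widehat L_0:=1$ and $\widehat L_n:=\tfrac{2}{p_0c^n}\Lt{n}$ for $n\ge 1$; these are monic of degree $n$, and being positive multiples of the $\Lt{n}$ they remain orthogonal with respect to $dx/\omega(x)$. Dividing $\Lt{n}=(cx+h)\Lt{n-1}-(c-1+h)\Lt{n-2}$ by $p_0c^n/2$ and using $\tfrac{2}{p_0}\Lt{0}=2$, one checks
$$\widehat L_1=x+\tfrac{h}{c},\qquad \widehat L_2=\bigl(x+\tfrac{h}{c}\bigr)^2-\tfrac{2(c-1+h)}{c^2},\qquad \widehat L_n=\bigl(x+\tfrac{h}{c}\bigr)\widehat L_{n-1}-\tfrac{c-1+h}{c^2}\,\widehat L_{n-2}\ \ (n\ge 3).$$
Matching with $P_{n+1}(x)=(x-\alpha_n)P_n(x)-\beta_nP_{n-1}(x)$, $P_0=1$, $P_1=x-\alpha_0$, this gives $\alpha_n=-h/c$ for all $n\ge 0$, $\beta_1=2(c-1+h)/c^2$, and $\beta_n=(c-1+h)/c^2$ for $n\ge 2$. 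Since $h\le 0<c$ we obtain $\alpha_n=-h/c\ge 0$, and since $c-1+h>0$ we obtain $\beta_n>0$, for every $n$.

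Finally, $(\widehat L_n)$ is orthogonal with respect to a measure supported in $[-1,1]$ and has $\alpha_n\ge 0$ for all $n\ge 0$ — this is exactly condition~(iii) of Theorem~\ref{TeoSeqRandom} — so $(\widehat L_n)$ is a random--walk polynomial sequence and the numbers $p_n,q_{n+1}>0$, $r_n\ge 0$ of \eqref{transmatrix} exist (with a reflecting or absorbing barrier at $0$ according to whether $q_0=0$ or $q_0>0$). Since $\Lt{n}$ differs from $\widehat L_n$ only by the positive constant $2/(p_0c^n)$, the same conclusion holds for $\Lt{n}$, which therefore determines a birth--and--death random walk. I expect the one delicate point to be the low--index bookkeeping in the third paragraph (the exceptional $\widehat L_1,\widehat L_2$ and the anomalous $\beta_1$), which is forced by $\Lt{0},\Lt{1}$ not following the generic leading--coefficient pattern; everything else is a direct appeal to the cited results.
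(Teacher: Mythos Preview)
Your argument is correct and uses the same two ingredients as the paper—orthogonality from Corollary~\ref{CoroChebichevTypeFuction} and the random--walk characterisation of Theorem~\ref{TeoSeqRandom}—but you verify a different one of the equivalent conditions in the latter. The paper simply rewrites the Lucas recurrence as $x\Lt{j}=\tfrac{c-1+h}{c}\Lt{j-1}-\tfrac{h}{c}\Lt{j}+\tfrac{1}{c}\Lt{j+1}$, reads off $q_n=(c-1+h)/c$, $r_n=-h/c$, $p_n=1/c$ (with $p_0=2/c$ in the top row), checks these satisfy~\eqref{Estocondi}, and displays the resulting stochastic matrix; this is condition~(ii). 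You instead pass to the monic normalisation, compute $\alpha_n=-h/c\ge 0$ and $\beta_n>0$, and invoke condition~(iii). Your route has the advantage that it actually \emph{proves} the support inclusion $[-s_1,s_2]\subseteq[-1,1]$—the first assertion of the proposition, which the paper's own proof never addresses—while the paper's route has the advantage of making the transition matrix explicit.

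One small slip: the normalising constants $2/(p_0c^n)$ are not ``positive'' when $p_0\in\{-1,-2\}$. This is harmless, since orthogonality and the monic recurrence parameters $\alpha_n,\beta_n$ are insensitive to the sign of a nonzero scalar, but you should replace ``positive multiples'' by ``nonzero scalar multiples''.
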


\begin{proof} 
For a fixed $h\leq 0$ and $c > 1-h>0$, it follows, from Corollary \ref{CoroChebichevTypeFuction}, that there is a family of orthogonal polynomials where the coefficients of the corresponding recurrence relation \eqref{recurrence} are given by $q_n=(c-1+h)/{c}>0$, $r_n=-h/c\geq 0$, and  $p_n=1/c>0$ satisfying \eqref{Estocondi}.
This choice of coefficients determines a semi-infinite stochastic matrix: 
\begin{equation*} 
P =\begin{pmatrix}
\frac{-h}{ c} & \frac{2}{c}&0& 0 & 0 & 0 & \dots\\
\vspace{0,2cm}
\frac {c-1+h} {c}&\frac{-h}{ c}& \frac{1}{c}& & 0 &0 &\dots\\
\vspace{0,2cm}
0 &\frac {c-1+h} {c}&\frac{-h}{ c}&  \frac{1}{c} & 0& 0 &\dots \\
\vspace{0,2cm}
0& 0 &\frac {c-1+h} {c}&\frac{-h}{c}& \frac{1}{c} & 0& \dots \\
\vspace{0,2cm}
\vdots &\vdots & \vdots &\ddots & \ddots & \ddots & \dots
\end{pmatrix}.
\end{equation*}
The conclusion follows from Theorem \ref{TeoSeqRandom}.  
\end{proof}

The matrix $P$  describes a random path that assigns a probability of $-h/c$ to stay in each state, another probability of $1/c$ to go to the next state at the right, and another probability $(c-1+h)/c$ to return to the next state, that is the previous state. See Figure \ref{FG1transitionsBetweenStates}. 

\begin{figure}[ht]
	\centering
 \includegraphics[width=3.5in]{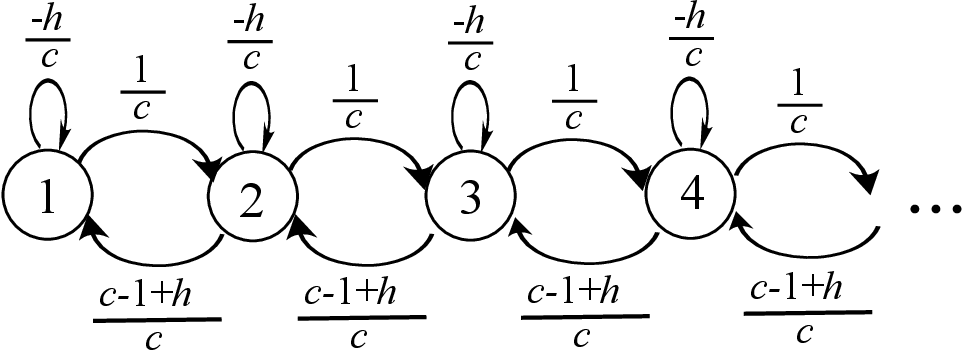} 
	\caption{Transitions between states.}
	\label{FG1transitionsBetweenStates}
	\end{figure}
The well-known Chebyshev polynomial of first kind  \eqref{ChebyPoly2} is the simplest case of a sequence of orthogonal polynomial satisfying the previous proposition. Its transition matrix is a stochastic matrix.

For the following corollary concerning the GFP of Lucas type, we require that $d(x)=cx+(k+4)/{4}$ and $g(x)=-k/4$ where $c, k \in \mathbb{Z}$, and $c < 0$, $k> 0$. The GFP of Lucas type, $\Lt{n}(x)$, is defined with initial conditions $\Lt{0} (x)=p_0$, $\Lt{1}( x)=p_1(x)$ as given in \eqref{Fibonacci;general:LT}. 

\begin{corollary} \label{continuosRandWalks} Let   
$\omega(x)=\sqrt{k-(cx+(k+4)/{4})^2}$. If $x>\sqrt k/c-(k+4)/4c$, then $1/\omega(x)\subset (0,\infty)$. Moreover,
$\Lt{n}(x)$ determine a birth-and-death processes in continuous time with stochastic transition matrix given by this semi-infinite Jacobi matrix    
\[ 
\mathcal{A} =\begin{pmatrix}
\frac{(4+k)}{4 c} & \frac{-2}{c}&0& 0 & 0 & 0 & \dots\\
\vspace{0,2 cm}
\frac {-k} {4c}&\frac{(4+k)}{4 c}& \frac{-1}{c} & 0 &0 &0 &\dots\\
\vspace{0,2cm}
0 &\frac {-k}{4c}&\frac{(4+k)}{4 c}& \frac{-1}{c} & 0& 0 &\dots \\
\vspace{0,2cm}
0& 0 &\frac {-k}{4c}&\frac{(4+k)}{4 c}& \frac{-1}{c} & 0& \dots \\
\vspace{0,2cm}
\vdots &\vdots & \vdots &\ddots & \ddots & \ddots & \dots
\end{pmatrix}.
\]
\end{corollary}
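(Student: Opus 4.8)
The plan is to follow the proof of Proposition~\ref{RandWalksGenChebCase} in its continuous-time form: first identify the orthogonality measure and confine its support to $[0,\infty)$, then match the three-term recurrence of $\Lt{n}$ with a birth-and-death generator, and finally invoke the continuous-time Karlin--McGregor representation. The first point comes for free: setting $t=1$, $h=(k+4)/4$ and $g(x)=-k/4$ in Corollary~\ref{CoroChebichevTypeFuction} shows that the (normalized) Lucas-type polynomials $\Lt{n}(x)$ are orthogonal with weight $1/\omega(x)$, where $\omega(x)=\sqrt{k-(cx+(k+4)/4)^{2}}$, on the interval where $k-d^{2}(x)\ge 0$. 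So the genuinely new work is (a) placing that interval inside $[0,\infty)$ and (b) reading the generator $\mathcal{A}$ off the recurrence.

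For (a): $\omega(x)$ is real precisely when $|cx+(k+4)/4|\le\sqrt{k}$, and since $c<0$ this is the interval whose left endpoint is $\big(\sqrt{k}-(k+4)/4\big)/c=\sqrt{k}/c-(k+4)/(4c)$ --- exactly the threshold named in the statement. I would show this endpoint is $\ge 0$ by writing $\sqrt{k}-(k+4)/4=-\tfrac14(\sqrt{k}-2)^{2}\le 0$ and dividing by $c<0$; hence for $x$ above the threshold the weight $1/\omega(x)$ is supported in $(0,\infty)$, which is the support hypothesis required by the continuous-time Karlin--McGregor formula (the spectral measure of a birth-and-death generator is carried by $[0,\infty)$).

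For (b): with $d(x)=cx+(k+4)/4$ and $g(x)=-k/4$, I would rewrite $\Lt{n}(x)=d(x)\Lt{n-1}(x)+g(x)\Lt{n-2}(x)$ in the form \eqref{recurrence2}, $-xQ_{j}(x)=\mu_{j}Q_{j-1}(x)+\beta_{j}Q_{j}(x)+\lambda_{j}Q_{j+1}(x)$, under the normalization $Q_{0}=\Lt{0}/p_{0}=1$, $Q_{1}=\Lt{1}/p_{0}=\tfrac12 d(x)$, $Q_{j}=\Lt{j}/p_{0}$. Comparing the coefficient of $x$ and the constant term on each side gives, for $j\ge 1$,
\[
\lambda_{j}=-\frac1c,\qquad \mu_{j}=-\frac{k}{4c},\qquad \beta_{j}=\frac{k+4}{4c}=-(\lambda_{j}+\mu_{j}),
\]
while the factor $\tfrac12$ in $Q_{1}$ forces the boundary values $\lambda_{0}=-2/c$, $\beta_{0}=(k+4)/(4c)$, and hence (from $\beta_{0}=-(\lambda_{0}+\mu_{0})$) the death rate $\mu_{0}=(4-k)/(4c)$. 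These are precisely the entries of the Jacobi matrix $\mathcal{A}$ in the statement. Since $c<0$ and $k>0$ make $\lambda_{j},\mu_{j}>0$ for $j\ge 1$, $\mathcal{A}$ is a genuine birth-and-death generator, and combining this with the orthogonality and support from step (a) lets the continuous-time Karlin--McGregor representation identify $\Lt{n}(x)$ with the continuous-time birth-and-death process generated by $\mathcal{A}$.

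The main obstacle, exactly as in the discrete case, is the bookkeeping at the boundary state $0$: one must check that the first row of $\mathcal{A}$ is a legitimate generator row, i.e.\ $\mu_{0}\ge 0$ and row sum $\le 0$. Here $\mu_{0}=(4-k)/(4c)$, so with $c<0$ this forces $k\ge 4$, the conservative case $\mu_{0}=0$ arising exactly at $k=4$, which is the Chebyshev polynomial of the first kind already isolated in the examples. Getting this boundary row right --- and, relatedly, checking the initial relation between $Q_{0}$ and $Q_{1}$ in \eqref{recurrence2} against $Q_{1}=\tfrac12 d(x)$ --- is where the real care is needed; the interior rows $j\ge 1$ are a routine coefficient comparison once the normalization is fixed.
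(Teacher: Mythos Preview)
The paper gives no proof of this corollary at all --- it is stated immediately after Proposition~\ref{RandWalksGenChebCase} and followed only by a transition diagram --- so your approach of transporting the argument of that proposition to the continuous-time setting via Corollary~\ref{CoroChebichevTypeFuction} and the recurrence~\eqref{recurrence2} is exactly the implicit route the paper intends, and your computations of $\lambda_j,\mu_j,\beta_j$ and the support bound are correct. Your observation that the boundary row forces $k\ge 4$ (with $k=4$ the conservative case) is a genuine refinement: the paper's hypothesis $k>0$ is too weak for $\mathcal{A}$ to be a valid generator, and indeed the later Remark~\ref{remark}(iv) silently works under $k>8$.
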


 The diagram in Figure \ref{FG3transitionsBetweenStates} depicts the transitions between states.

\begin{figure}[ht]
	\centering
 \includegraphics[width=3.5in]{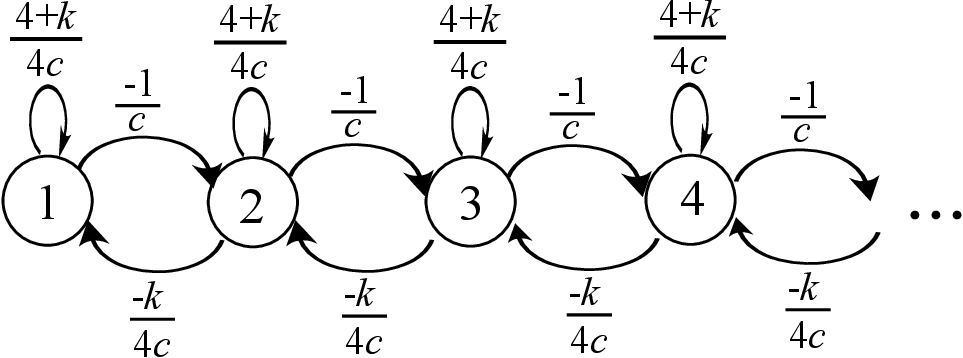} 
	\caption{Transitions between states.}
	\label{FG3transitionsBetweenStates}
	\end{figure}

Analyzing the examples of orthogonal GFP given in Table \ref{familiarfibonacci}, we can easily see that Chebyshev of  first kind and Fermat Lucas are random-walk polynomial sequence.

\subsection{Remark}\label{remark}
We conclude this paper with a brief discussion of the relationship between orthogonal polynomials and random walks. Our goal is to draw the attention of the orthogonal polynomial community to the vast potential for research collaboration. By now, several classical criteria have been established linking the positive recurrence or ergodicity of discrete-time birth-and-death chains to the coefficients of the orthogonal polynomials that induce these stochastic processes.
 
\begin{enumerate}[(i)]
    \item Indeed, let $\{X_n,n=0,1, \dots \}$ be a discrete-time birth-and-death chain with $q_0 = 0$ (i.e., $0$ is a reflecting state). It follows from  \cite[Theorem 2.24]{Dominguez} that the following statements are equivalent.
    
\begin{enumerate}
  \item  The birth-and-death chain  is positive recurrent or ergodic.
  
  \item $\sum_{n=0}^{\infty} \pi_n<\infty$. 
\end{enumerate}

\item Let $\{X_n,n=0,1, \dots\}$ be a discrete-time birth-and-death chain with $q_0 > 0$. Then by \cite[Theorem 2.32]{Dominguez} the following are equivalent.

\begin{enumerate}
  \item Absorption at $-1$ is ergodic.
  
  \item $\sum_{n=0}^{\infty} \pi_n<\infty$. 
\end{enumerate}

\item Similarly, a corresponding statement applies to continuous-time birth-and-death chains. For the case $\mu _0=0$ refer to  \cite[Theorem 3.42]{Dominguez}, and for the case $\mu_0 >0$, see \cite[Theorem 3.51]{Dominguez}.
 
The continuous--time birth-and-death chains depend on the form of the given infinitesimal operator $\mathcal{A}$ to ensure the existence of (unique) solutions to the Kolmogorov equations \eqref{Kolmogoroveq} and \eqref{Kolmogoroveq2} that yield a suitable transition function. If the matrix $\mathcal{A}$ describes the process for a finite number of jumps but does not uniquely determine the process, then the Kolmogorov equations may have multiple solutions. This issue does not arise in discrete-time Markov chains, where solutions always exist and are unique.

\item It follows from Proposition \ref{RandWalksGenChebCase} 
that the associated random walk admits a stochastic matrix whenever $h=2-c$ since in this case $q_0=1-r_0-p_0=0$. However, noting that $c-1+h=1$ and using \eqref{pi}, we obtain that  $\pi_n$ is given by $\pi_0=1$ and
\[
\pi_n:= \frac{p_0 p_1\dots p_{n-1}}{q_1 q_2\dots q_n}=\frac{2/c . 1/c. \dots 1/c}{(\frac{1}{c})^n}=\frac{2/c . (1/c)^{n-1}}{(\frac{1}{c})^n}=2.
\]
We may conclude that the series $\sum_{n=0}^{\infty} \pi_n$ diverges. Therefore, in this case, we obtain a 
non--ergodic random walk. 

On one hand, it follows from Proposition \ref{RandWalksGenChebCase} that whenever $q_0>0$ and using \eqref{pi}, we obtain that $\pi_n$ is given by $\pi_0=1$ and
\[
\pi_n:= \frac{p_0 p_1\dots p_{n-1}}{q_1 q_2\dots q_n}=\frac{2}{(c-1+h)^n}. 
\]
If $c-1+h>2$, then $\sum_{n=0}^{\infty} \pi_n<\infty$, and we conclude that absorption at $-1$ is ergodic. On the other hand, from Corollary \ref{continuosRandWalks} we obtain that in the continuous case, for any $\mu>0$ and $k>8$, it follows that  $\sum_{n=0}^{\infty} \pi_n<\infty$.
Thus, we conclude that absorption at $-1$ is ergodic.

\item We analyzed which generalized Fibonacci polynomials determine a random walk and we conclude that only generalized Lucas polynomials can be studied or analyzed using this approach. This is so because in the case of generalized Fibonacci polynomials, the whole first line of the induced Jacobi matrix is entirely null. Therefore, the conditions of Theorem \ref{TeoSeqRandom} are not satisfied.

\item Finally, we highlight an interesting and unexpected connection with duality theory in the context of Markov stochastic processes. The purpose of duality theory is to transform a difficult question about a given process into a simpler one concerning its dual. The main idea is as follows: first, identify a Lie algebra to which the infinitesimal generator of the stochastic process belongs. As usual, this algebra has distinct left and right representations, which are related by intertwiners. If two representations share the same generator, then the corresponding stochastic processes are related by duality, with the intertwiner serving as the duality function. Notably, many duality functions turn out to be orthogonal polynomials. For more details on duality and orthogonal polynomials, see \cite{FranGiar} and references therein.

\end{enumerate}

\section{Acknowledgment} 
The first author was partially supported by 2022/08948-2 and 2023/13453-5 S\~ao Paulo Research Foundation (FAPESP). 
The second author was partially supported by the Citadel Foundation. The fourth author was partially supported by grants
2022/08948-2 S\~ao Paulo Research Foundation (FAPESP). She also thanks to the Citadel for warm hospitality.

\bigskip
\hrule
\bigskip

\noindent  MSC 2020:
Primary 11B39; Secondary 42C05.

\noindent \emph{Keywords: }
generalized Fibonacci polynomial, Chebyshev polynomials, orthogonal polynomial, random walk, birth-and-death process, invariant measure.


\begin{thebibliography}{99}
\bibitem{AndreJeanninII} R. Andr\'e-Jeannin, A note on a general class of polynomials. II, \emph{Fibonacci Quart.} \textbf{33} (1995), 341--351.

\bibitem{AndreJeanninI} R. Andr\'e-Jeannin, A note on a general class of polynomials, \emph{Fibonacci Quart.} \textbf{32} (1994), 445--454.

\bibitem{Coolen} P.~Coolen-Schrijnern and E.~A.~van Doorn, Analysis of random walks using orthogonal polynomials, \emph{J. Comput. Appl. Math.} \textbf{99} (1998) 387--399.  

\bibitem{Dominguez} M.~Dom\'inguez de la Iglesia, 
\emph{Orthogonal polynomials in the spectral analysis of Markov processes-birth-death models and diffusion},  Encyclopedia Math. Appl. [181]
Cambridge University Press, Cambridge, 2022. 

\bibitem{Du-Li} W.~Du, X.~Li, and Y.~Li, The energy of random graphs, \emph{Lin. Algebra Appl.} \textbf{435}  (2011) 2334--2346.

\bibitem{FlorezJC} R. Fl\'orez and  J. C. Saunders, Irreducibility of generalized Fibonacci polynomials, \emph{Integers}, \textbf{22} (2022) \#A69.

\bibitem{florezHiguitaMuk2018} R.~Fl\'{o}rez, R.~Higuita, and A.~Mukherjee,
Characterization of the strong divisibility property for generalized Fibonacci polynomials,
\emph{ Integers}, \textbf{18} (2018) \#A14.

\bibitem{FlorezMcAnallyMuk}R.~Fl\'{o}rez, N.~McAnally, and A.~Mukherjee, Identities for the generalized Fibonacci polynomial, \emph{Integers},  \textbf{18B} (2018), \#A2.

\bibitem{FranGiar} C.~Franceschini and C.~Giardin\`a, Stochastic duality and orthogonal polynomials, \emph{Springer Proc. Math. Stat.} \textbf{300} Springer, Singapore, 2019, 187--214.

\bibitem{hoggattRoots} V.~E.~Hoggatt, ~Jr. and M.~Bicknell, Roots of Fibonacci polynomials, 
\emph{Fibonacci Quart.} \textbf{11} (1973), 271--274.

\bibitem{HoradamOrthogonal} A.~F.~Horadam, New aspects of Morgan-Voyce polynomials, \emph{Applications of Fibonacci numbers}, vol. 7 (Graz, 1996), 161--176. 

\bibitem{Karlin2}
S.~Karlin and J.~L~ McGregor, Random walks, \emph{Illinois J. Math.} \textbf{3} (1959) 66--81. 

\bibitem{Koornwinder} T.~H.~Koornwinder, Orthogonal polynomials, a short introduction, (2021) arXiv:1303.2825.

\bibitem{Marcellan}
F.~Marcell\'an and R.~\'Alvarez-Nodarse, On the ``Favard theorem" and its extensions, \emph{J. Comput. Appl. Math.} \textbf{127} (2001), 231--254.

\bibitem{Rivlin} T.~J.~Rivlin, \emph{The Chebyshev polynomials, Pure Appl. Math.}, Wiley-Interscience, 1974.

\bibitem{Swamy8} M.~N.~S.~Swamy, Properties of the Polynomials Defined by Morgan-Voyce, \emph{Fibonacci Quart.} \textbf{4} (1966) 73--81.

\bibitem{Swamy9} M.~N.~S.~Swamy, Further Properties of Morgan-Voyce Polynomials, \emph{Fibonacci Quart.} \textbf{6}  (1968): 167--175.

\bibitem{Wigner1} E.~P.~Wigner, Characteristic vectors of bordered matrices with infinite dimensions, \emph{Ann. Math}. \textbf{62} (1955), 548--564.

\bibitem{Wigner2} E.~P.~Wigner, On the distribution of the roots of certain symmetric matrices, \emph{Ann. Math.} \textbf{67} (1958), 325--327.

\bibitem{WebbParberry}
W.~A.~Webb and E.~A.~Parberry, Divisibility properties of Fibonacci polynomials, \emph{Fibonacci Quart.} \textbf{7} (1969), 457--463.
\end{thebibliography}
\end{document}